\title{
On the Hardness of Meaningful Local Guarantees
\\in Nonsmooth Nonconvex Optimization
}
\author{
  Guy Kornowski\thanks{equal contribution}\\Weizmann Institute of Science\\
  \texttt{guy.kornowski@weizmann.ac.il}
  \and
  Swati Padmanabhan\footnotemark[1]\\Massachusetts Institute of Technology\\
  \texttt{pswt@mit.edu}
  \and
  Ohad Shamir\\Weizmann Institute of Science\\
  \texttt{ohad.shamir@weizmann.ac.il}
}
\definecolor{otherlightblue}{RGB}{0, 100, 200}
\definecolor{otherblue}{RGB}{0, 50, 100}
\definecolor{othergreen}{RGB}{60, 120, 0}
\numberwithin{equation}{section}
\newlist{thmlist}{enumerate}{1}
\setlist[thmlist]{label=(\roman{thmlisti}),
                  ref=\thethm.(\roman{thmlisti}),
                  noitemsep}
\crefname{equation}{Equation}{Equations}
\crefname{fact}{Fact}{Facts}
\crefname{lemmma}{Lemma}{Lemmas}
\crefname{figure}{Figure}{Figures}
\crefname{defn}{Definition}{Definitions}
\crefname{ineq}{Inequality}{Inequalities}
\crefname{corollary}{Corollary}{Corollaries}
\Crefname{listfact}{Fact}{Facts}
\let\ref\cref
\newcommand{\compresslist}{ 
\setlength{\itemsep}{1pt}
\setlength{\parskip}{0pt}
\setlength{\parsep}{0pt}
}
\theoremstyle{plain}
\newtheorem{theorem}{Theorem}[section]
\newtheorem{claim}[theorem]{Claim}
\newtheorem{defn}[theorem]{Definition}
\newtheorem{definition}[theorem]{Definition}
\newtheorem{fact}[theorem]{Fact}
\newtheorem{lemma}[theorem]{Lemma}
\newtheorem{remark}[theorem]{Remark}
\newlist{propenum}{enumerate}{1} 
\setlist[propenum]{label=(\roman*), ref=\theproposition(\roman*)}
\crefname{prop}{Proposition}{Propositions}
\newlist{factenum}{enumerate}{1} 
\setlist[factenum]{label=(\roman*), ref=\thefact(\roman*)}
\crefname{fact}{Fact}{Facts}
\newlist{lemmenum}{enumerate}{1}
\setlist[lemmenum]{label=(\roman*), ref=\thelemma(\roman*)}
\crefname{lemma}{Lemma}{Lemmas} 
\newlist{thmenum}{enumerate}{1} 
\setlist[thmenum]{label=(\roman*), ref=\thelemma(\roman*)}
\newcommand{\reals}{\mathbb{R}}
\newcommand{\dist}{\mathrm{dist}}
\newcommand{\NN}{\mathbb{N}}
\newcommand{\OO}{\mathbb{O}}
\newcommand{\Sbb}{\mathbb{S}}
\newcommand{\E}{\mathbb{E}}
\newcommand{\Acal}{\mathcal{A}}
\newcommand{\bzero}{{\mathbf{0}}}
\newcommand{\bx}{\mathbf{x}}
\newcommand{\bw}{\mathbf{w}}
\newcommand{\be}{\mathbf{e}}
\newcommand{\bg}{\mathbf{g}}
\newcommand{\bz}{\mathbf{z}}
\newcommand{\bv}{\mathbf{v}}
\newcommand{\bu}{\mathbf{u}}
\newcommand{\by}{\mathbf{y}}
\global\long\def\intv{\mathbf{\mathbf{I}}}%
\global\long\def\so{\sigma_{1}}%
\global\long\def\st{\sigma_{2}}%
\global\long\def\si{\sigma_{i}}%
\global\long\def\sk{\sigma_{k}}%
\global\long\def\giz{g_{0}^{(i)}}%
\global\long\def\gio{g_{1}^{(i)}}%
\global\long\def\gisi{g_{\sigma_{i}}^{(i)}}%
\global\long\def\goso{g_{\sigma_{1}}^{(1)}}%
\global\long\def\gtst{g_{\sigma_{2}}^{(2)}}%
\global\long\def\gthsth{g_{\sigma_{3}}^{(3)}}%
\global\long\def\pio{\phi_{1}^{(i)}}%
\global\long\def\piz{\phi_{0}^{(i)}}%
\global\long\def\poso{\phi_{\sigma_{1}}^{(1)}}%
\global\long\def\pksk{\phi_{\sk}^{(k)}}%
\global\long\def\pisi{\phi_{\sigma_{i}}^{(i)}}%
\global\long\def\ptst{\phi_{\sigma_{2}}^{(2)}}%
\global\long\def\pNsN{\phi_{\sigma_{N}}^{(N)}}%
\newcommand{\hbNs}{\overline{h}^N_{\sigma}}
\newcommand{\thib}{\theta^{(i)}_{\mathrm{base}}}
\newcommand{\this}{\theta^{(i)}_{\mathrm{shift}}}
\newcommand{\thipb}{\theta^{(i+1)}_{\mathrm{base}}}
\newcommand{\thiNpb}{\theta^{(N+1)}_{\mathrm{base}}}
\global\long\def\Po{\Phi^{(1)}}%
\global\long\def\Pt{\Phi^{(2)}}%
\global\long\def\PN{\Phi^{(N)}}%
\global\long\def\rNs{r_{\sigma}^{(N)}}%
\global\long\def\di{\delta^{(i)}}%
\global\long\def\ei{\epsilon^{(i)}}%
\global\long\def\defeq{\stackrel{\textrm{def}}{=}}%
\global\long\def\hf{\tfrac{1}{2}}%
\newcommand{\norm}[1]{\left\|#1\right\|}
\newcommand{\inner}[1]{\left\langle#1\right\rangle}
\newcommand\numberthis{\addtocounter{equation}{1}\tag{\theequation}} 
\newlist{itemizec}{itemize}{2}
\setlist[itemizec,1]{label=\faCaretRight ,wide, parsep= 0.05pt, left = 15pt}
\begin{document}
\maketitle
\begin{abstract}
We study the oracle complexity of nonsmooth nonconvex optimization, with the algorithm assumed to have access only to local function information.
It has been shown by Davis, Drusvyatskiy, and Jiang (2023) that for nonsmooth Lipschitz functions satisfying certain regularity and strictness conditions, perturbed gradient descent converges to local minimizers \emph{asymptotically}. 
Motivated by this result and by other recent algorithmic advances in nonconvex nonsmooth optimization concerning Goldstein stationarity, we consider the question of obtaining a non-asymptotic rate of convergence to local minima for this problem class. 

We provide the following negative answer to this question: Local algorithms acting on regular Lipschitz functions \emph{cannot}, in the worst case, provide meaningful local guarantees in terms of function value in sub-exponential time, even when all near-stationary points are global minima.
This sharply contrasts with the smooth setting, for which it is well-known that standard gradient
methods can do so in a dimension-independent rate.
Our result complements the rich body of work in the 
theoretical computer science
literature
that provide hardness results conditional on conjectures such as $\mathsf{P}\neq\mathsf{NP}$
or
cryptographic assumptions, in that ours holds unconditional of any such assumptions.
\end{abstract}

\newpage

\section{Introduction}\label{sec:introduction}

Nonconvex optimization problems are ubiquitous throughout the computational and applied sciences. Since globally optimizing nonconvex objectives is infeasible in general, optimization theory has long pursued iterative algorithms that find solutions satisfying some \textit{local} optimality guarantees.
For example, given a sufficiently smooth objective $f:\reals^d\to\reals$, 
a folklore result asserts that gradient descent, when applied with a suitable step-size, converges to a stationary point at a dimension-independent rate.
Similarly, the perturbed gradient descent
update
\[
\bx_{t+1}\leftarrow \bx_t-\eta_t\nabla f(\bx_t)+\xi_t,
\numberthis\label{eq:PGD}
\]
where $\xi_t$ is a mean-zero random variable, is known \citep{jin2021nonconvex} to asymptotically converge only to local minimizers of $f$ for suitable choices of $(\eta_t,\xi_t)_{t\in\NN}$. Moreover, under a \emph{``strictness''} assumption~\cite{ge2015escaping}  stating, roughly, that critical points are either sufficiently negatively curved or local minimizers, this convergence is known to have a favorable polynomial rate  nearly independent of the dimension \cite{jin2017escape}.

\looseness=-1Even though the state of affairs is relatively well understood for smooth objectives, many modern applications in machine learning, operations research, and statistics (e.g., deep learning with ReLU activations~\cite{lecun2015deep}, piecewise affine regression~\cite{cui2021modern})
require solving nonconvex problems that are also inherently \textit{nonsmooth}. 
This structure presents several important challenges: As a prominent example, it calls into question the correctness of automatic differentiation with
PyTorch and TensorFlow~\cite{kakade2018provably}. 
Aiming at this regime,  \citet{davis2022proximal} studied (nonsmooth) weakly-convex functions,\footnote{A function $f$ is called weakly-convex if there exists $\rho>0$ such that $\bx\mapsto f(\bx)+\frac{\rho}{2}\norm{\bx}^2$ is convex.} showing that strictness enables proximal methods to asymptotically converge to local minimizers. Subsequently, this convergence was  shown to have a polynomial rate \cite{huang2021escaping,davis2022escaping}, thus extending to a nonsmooth setting what was previously known only for smooth objectives.

However, many prominent contemporary optimization problems such as neural network training fall outside this class of weakly
convex objectives.\footnote{Indeed, this is the case even for a single negated ReLU neuron, namely $z\mapsto-\max\{0,z\}$.} In this regard,
 \citet{davis2023active} 
 proved a vast generalization of the previously mentioned results, asserting that for nonsmooth Lipschitz functions satisfying mild regularity properties and a strictness assumption,
the dynamics dictated by \cref{eq:PGD} asymptotically 
converge only to local minimizers.
Another asymptotic result of similar spirit was also obtained in
\cite{bianchi2023stochastic}.
These developments motivate the following natural question:
\begin{quote}
\centering
\textit{
Is it possible to obtain non-asymptotic convergence guarantees to local minima, 
when optimizing sufficiently regular Lipschitz objectives
that satisfy a strictness property?
}
\end{quote}

A priori, recent advances in nonsmooth nonconvex optimization suggest that there is room for optimism for such finite-time guarantees. Following the work of \citet{zhang2020complexity}, a surge of recent results showed that it is possible to converge, at a dimension-free polynomial rate, to approximate-stationary points in the sense of Goldstein~\cite{goldstein1977optimization} when optimizing Lipschitz functions \cite{tian2022finite,davis2022gradient}. This remains an active area of research, with recent works obtaining finite-time guarantees for optimizing Lipschitz functions in terms of Goldstein-stationarity under a variety of settings such as stochastic \cite{cutkosky2023optimal}, constrained \cite{grimmer2023goldstein}, and zeroth-order optimization \cite{lin2022gradient,kornowski2024algorithm}.

Nevertheless, as our main result (\cref{{thm: stat hardness}}) will soon show, obtaining \textit{any} non-trivial convergence rate in terms of local function decrease is impossible even for strict functions.
In particular, we prove that even under suitable regularity assumptions and the non-existence of non-strict saddles,
any algorithm whatsoever based on local queries will necessarily get stuck, in the worst case, at points at which there is significant local decrease,
unless the number of iterations grows exponentially with the dimension. In fact, this statement holds even under the supposedly easier case in which all approximate-stationary points below some constant (sub)gradient norm are in fact global minima -- which trivially precludes the existence of non-strict saddles.

\subsection{Related Work}
As noted in the introduction, there has been a long line of work in the optimization literature on studying convergence of first-order methods for various classes of nonconvex programs. 
Some early works include those of \citet{nurminskii1974minimization, norkin1986stochastic, ermol1998stochastic, benaim2005stochastic}. 
More recently, \citet{davis2020stochastic} showed the first rigorous convergence guarantees for the stochastic subgradient method on Whitney stratifiable functions by building on techniques from \citet{drusvyatskiy2015curves, duchi2018stochastic}. For
well-behaved problems with $\rho$-weakly convex objective functions, \citet{davis2019stochastic, davis2019proximally}
studied convergence to stationary points of the Moreau envelope
and  provide dimension-free convergence rates for finding these points. As alluded to earlier, the work of Zhang et al~\cite{zhang2020complexity} provided the first finite-time dimension-free guarantees for converging to a Goldstein stationary point of a given Lipschitz function, a result that was subsequently strengthened to hold under standard first-order oracle access~\cite{tian2022finite, davis2022gradient}. Finally, by slightly restricting the class of nonsmooth objectives, \citet{kong2023cost} develop a simple de-randomized version of the algorithm of \cite{zhang2020complexity} with increased, but still dimension-independent, complexity and quantify how
the cost in complexity of optimizing nonsmooth objectives grows with their level of nonconvexity. 

Complementing these algorithmic results, there has also been extensive effort providing hardness results on reaching different solution concepts in nonsmooth nonconvex optimization.  
The computational intractability of globally minimizing a Lipschitz function up to a small
constant tolerance was known since the works of \citet{nemirovskiyudin1983, murty1987some}. 
More recently, Zhang et al~\citep{zhang2020complexity} showed that local, first-order algorithms acting on nonsmooth nonconvex functions cannot attain either small
function error or small gradients: Indeed, approximately-stationary points can be easily ``hidden'' inside some arbitrarily small neighbourhood, which cannot in general be found in a finite number of iterations. Subsequently, \citet{kornowski2022oracle} considered if, for general Lipschitz functions, instead of trying to find approximately-stationary points, the more
relaxed notion of getting \emph{near} an approximately-stationary point is more tractable. Via the construction of a novel  hard function, \cite{kornowski2022oracle} answer this question negatively (for both deterministic and randomized
algorithms). 
This hardness result was subsequently adapted by \citet{tian2021hardness}, for the case of deterministic algorithms, endowing it with Clarke regularity by employing a Huber-type smoothing.
Next, \citet{jordan2023deterministic} provide an improved understanding of achieving Goldstein stationarity by showing that no deterministic algorithm can achieve a dimension-free rate of convergence.
The work of \citet{tian2024no} additionally showed that no deterministic general zero-respecting algorithm for achieving Goldstein stationarity has finite-time complexity.

Our work adds to this line of hardness results: For oracle-based algorithms seeking to achieve local optimality in Lipschitz functions, we prove a lower bound that is exponential in dimension, even when the algorithm is allowed to employ randomization, the function is Clarke regular, and all near-stationary points of the function are in fact global minima.

In a somewhat different direction, it is interesting to compare our result to a rich body of work in the theoretical computer science literature. Some of the earliest such works include those of \citet{sahni1972some, sahni1974computationally}, which showed that global optimization of a general quadratic program is $\mathsf{NP}$-hard, and those by \citet{murty1987some, pardalos1988checking}, which showed that it is $\mathsf{NP}$-hard to test
whether a given point is a local minimizer
 for constrained nonconvex
quadratic programming. The recent work of \citet{ahmadi2022complexityonthe} shows that unless $\mathsf{P}=\mathsf{NP}$, there cannot be a polynomial-time algorithm that finds a point within Euclidean distance $c^n$ (for any constant $c\geq 0$) of a local minimizer of an
$n$-variate quadratic function over a polytope. Additionally, they show that the problem of deciding whether a quadratic function has a local minimizer
over an (unbounded) polyhedron, and that of deciding if a quartic polynomial has a
local minimizer are $\mathsf{NP}$-hard. In the process, \cite{ahmadi2022complexityonthe} answers
a question posed by \citet{pardalos1992open}. Another open problem listed by \cite{pardalos1992open} was recently settled by \citet{fearnley2024complexity}, who showed that  the problem remains hard even if we are
 searching only for a Karush-Kuhn-Tucker (KKT) point. In particular, they show that the quadratic-KKT problem is $\mathsf{CLS}$-complete (a problem class introduced by \cite{daskalakis2011continuous}), which, by another result of \cite{fearnley2022complexity}, is unlikely to have polynomial-time algorithms.

The most important difference of these results from that of ours is that while the above lower bounds rely on conditional hardness assumptions from complexity theory (such as $\mathsf{P}\neq \mathsf{NP}$), our framework of oracle complexity, which reduces optimization to information theoretic notions, enables proving  lower bounds that are entirely unconditional. 
Furthermore, many of these results are stated in terms of hardness of verification, which, in general, does not imply hardness of search --- namely, finding points of interest, as opposed to verifying that a given point is such.
Finally, verification complexity results typically focus on non-Lipschitz polynomials or other function classes, which, as is, do not directly correspond to known complexity upper bounds discussed in our introduction.

\subsection{Preliminaries}\label{sec:prelims}

\paragraph{Notation.}
We let $\NN = \{1, 2, \dotsc, \}$ be the  natural numbers starting from one. We let boldfaced letters (e.g., $\bx$) denote vectors. 
We denote the $d$-dimensional Euclidean space by $\reals^d$ and by $\langle\cdot,\cdot\rangle$ and $\|{}\cdot{}\|$ its associated inner product and norm, respectively. We use
$\bzero_d$ (or simply $\bzero$ when $d$ is clear from context) to denote the zero vector in $\reals^d$ and $\be_1,\be_2,\ldots$ for the standard basis vectors. Given a vector $\bx$, we denote by $x_i$ its $i$-th coordinate, by $\bx_{1:i}:=(x_1,\dots,x_i)$ its truncation, and by $\bar{\bx}=\tfrac{\bx}{\norm{\bx}}$ the normalized vector (assuming $\bx\neq\bzero$).
We use $B(\bx, \delta)$ to denote the closed Euclidean ball of radius $\delta >0$ centered at $\bx$.

\paragraph{Nonsmooth Analysis.} 
We say a function $f:\reals^d\mapsto\reals$ is $L$-Lipschitz if for any $\bx,\by$, we have $|f(\bx) - f(\by)|\leq L \|\bx-\by\|$. 
Recall that by Rademacher's theorem~\cite{rademacher1919partielle}, Lipschitz functions are differentiable almost everywhere (in the sense of Lebesgue). Throughout our paper, we will be working with $L$-Lipschitz functions for some $L$ (we specify our exact set of assumptions in \cref{sec:lower-bound}). We therefore next collect some relevant quantities associated with Lipschitz functions. 

\begin{definition}\label{def:ordinaryDirDer}
    For any Lipschitz function $f:\reals^d \mapsto\reals$, the (ordinary) directional derivative of $f$ at a point $\bx$ in the direction $\bv$ is defined as \[f^\prime(\bx; \bv):= \lim_{t\rightarrow0^+} \frac{f(\bx+t\bv)-f(\bx)}{t}.\] 
\end{definition}
\begin{definition}[\cite{clarke1981generalized, rockafellar1980generalized}]\label{def:genDirDerClarkeSubDiff}
    For any Lipschitz function $f:\reals^d \mapsto\reals$, the generalized directional derivative of $f$ at a point $\bx$ in the direction $\bv$ is defined as \[f^\circ(\bx; \bv):=
    \underset{\by\rightarrow\bx,\,t\rightarrow0^+}{\lim\sup} \frac{f(\by+t\bv)-f(\by)}{t}.\] 
\end{definition}
\noindent The generalized directional derivative leads to the following definition of the Clarke subdifferential. 
\begin{definition}[\citet{clarke1981generalized, clarke1990optimization}]
For any Lipschitz function $f:\reals^d\to\reals$ and point $\bx\in\reals^d$, the Clarke subdifferential of $f$ at $\bx$ is defined as 
    \[\partial f(\bx):= \{\bg\mid \langle \bg, \bv \rangle \leq f^\circ(\bx; \bv),\, \forall \bv\in\reals^d \},\] with each element $\bg$ of this set termed a Clarke subgradient.  
\end{definition}
\noindent The Clarke subdifferential may be  equivalently defined as 
\[
\partial f(\bx):=\mathrm{conv}\{\bg\,:\,\bg=\lim_{n\to\infty}\nabla f(\bx_n),\,\bx_n\to \bx\}~, 
\]
namely, the convex hull of all limit points of $\nabla f(\bx_n)$ over all sequences of differentiable points $\bx_n$ which converge to $\bx$. 
If the function is continuously differentiable at a point or convex, the Clarke subgradient there reduces to the gradient or subgradient in the convex analytic sense, respectively. 
\noindent Equipped with the notation of the Clarke subdifferential, one may define a Clarke regular function. 
\begin{definition}[\cite{clarke1981generalized}; {\cite[Definition $2.3.4$]{clarke1990optimization}}]\label{def:clarkeRegularFn}
A locally Lipschitz function $f:\reals^d\mapsto\reals$ is Clarke regular at $\bx\in\reals^d$ if for every direction $\bv\in \reals^d,$ the ordinary directional derivative $f^\prime(\bx; \bv)$ exists and \[f^\prime(\bx; \bv) = f^\circ(\bx; \bv).\]
We say the function $f:\reals^d\to\reals$ is  regular if it is Clarke regular at all $\bx
\in\reals^d$.
\end{definition}
\noindent Finally, we denote the minimal norm subgradient at a point $\bx$ as
\[
\bar{\partial}f(\bx):=\arg\min\{\norm{\bg}\,:\,\bg\in\partial f(\bx)\}~,
\]
and we say that  $\bx$ is an $\epsilon$-stationary point of $f(\cdot)$ if $\norm{\bar{\partial}f(\bx)}\leq\epsilon$.

\begin{fact}[{\cite[Proposition $2.3.6$]{clarke1990optimization}}]\label{fact:prop_clarke_regularity_calculus}
Let $f$ be Lipschitz near $x$. 
\begin{factenum}
\compresslist{
    \item\label[fact]{item:prop_cont_diff_implies_reg} If $f$ is continuously differentiable 
    at $x$, then $f$ is regular at $x$. 
    \item\label[fact]{item:prop_reg_convex} If $f$ is convex, then $f$ is regular at $x$. 
    \item\label[fact]{item:prop_reg_finsum} A finite linear combination by nonnegative scalars of functions regular at $x$ is regular at $x$. 
}
\end{factenum}
\end{fact}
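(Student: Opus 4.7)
The plan is to verify, for each of the three items, that for every direction $v$ the ordinary and generalized directional derivatives coincide at $x$, which is precisely the definition of Clarke regularity at $x$. The inequality $f^\circ(x;v) \ge f'(x;v)$ is automatic by specializing $y = x$ in the $\limsup$ defining $f^\circ$, so in each case the content is the reverse inequality together with the existence of $f'(x;v)$.

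For \emph{(i)}, continuous differentiability at $x$ means $\nabla f$ exists in a neighborhood of $x$ and is continuous there, so directly $f'(x;v) = \langle \nabla f(x), v \rangle$. For the generalized derivative, given any sequences $y_n \to x$ and $t_n \to 0^+$, the mean value theorem applied along the segment from $y_n$ to $y_n + t_n v$ expresses the difference quotient as $\langle \nabla f(\xi_n), v \rangle$ for some $\xi_n$ on that segment. Since $\xi_n \to x$, continuity of $\nabla f$ yields $f^\circ(x;v) = \langle \nabla f(x), v \rangle = f'(x;v)$.

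For \emph{(iii)}, the ordinary directional derivative is linear in $f$ whenever the summands' directional derivatives exist, while the generalized derivative is only subadditive (from the subadditivity of $\limsup$). For $f = \sum_i \lambda_i f_i$ with $\lambda_i \ge 0$ and each $f_i$ regular at $x$, chaining these observations gives
\[
f'(x;v) = \sum_i \lambda_i f_i'(x;v) = \sum_i \lambda_i f_i^\circ(x;v) \ge f^\circ(x;v) \ge f'(x;v),
\]
which forces equality throughout and hence regularity of $f$ at $x$.

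The genuinely delicate step is \emph{(ii)}. For convex Lipschitz $f$, existence of $f'(x;v)$ follows from monotonicity of the difference quotient $t \mapsto (f(x + tv) - f(x))/t$ in $t > 0$. What requires care is showing $f^\circ(x;v) \le f'(x;v)$ even though $y$ varies in the $\limsup$; the cleanest route is to establish that for convex $f$ the Clarke subdifferential coincides with the convex subdifferential, so that both $f'(x;\cdot)$ and $f^\circ(x;\cdot)$ coincide with the support function of $\partial f(x)$ and hence with each other, or equivalently to invoke upper-semicontinuity of $y \mapsto f'(y;v)$, which is a standard property of convex functions. This identification step is the main obstacle, as it relies on a nontrivial piece of Clarke calculus rather than on manipulations of the definitions alone.
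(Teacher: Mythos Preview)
The paper does not prove this fact; it is stated as a ``Fact'' with a citation to Proposition~2.3.6 of Clarke's \emph{Optimization and Nonsmooth Analysis} and no proof is given. Your sketch is correct and is essentially the standard argument found in Clarke: part~(i) via the mean value theorem and continuity of $\nabla f$, part~(iii) via subadditivity of the $\limsup$ together with linearity of ordinary directional derivatives, and part~(ii) by identifying the Clarke and convex subdifferentials so that both $f'(x;\cdot)$ and $f^\circ(x;\cdot)$ are the support function of $\partial f(x)$. You rightly flag~(ii) as the one step needing an external ingredient; Clarke's own proof there likewise appeals to a separate result (his Proposition~2.2.7) for that identification rather than arguing from the definitions alone.
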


\noindent Finally, we state the following important facts from subdifferential calculus that we use. 

\begin{fact}[Proposition 2.3.3 and Theorem 2.3.9 \cite{clarke1990optimization}]\label{fact:prop_clarke_subdifferential_calculus}
    We have that $\partial (g_1+g_2)\subseteq \partial g_1 + \partial g_2$, and if $g_1$ is univariate, then $\partial(g_1 \circ g_2)(\mathbf{x})\subseteq \mathrm{conv}\left\{r_1 \mathbf{r}_2: r_1 \in \partial g_1 (g_2(\mathbf{x})), \mathbf{r}_2\in \partial g_2(\mathbf{x})\right\}$. 
\end{fact}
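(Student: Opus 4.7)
The plan is to verify both inclusions using the equivalent characterization of the Clarke subdifferential given earlier in the excerpt, namely
\[
\partial f(\bx) = \mathrm{conv}\bigl\{\bg : \bg = \lim_{n\to\infty} \nabla f(\bx_n),\ \bx_n \to \bx\bigr\},
\]
which is available because Lipschitz functions are differentiable a.e.\ by Rademacher's theorem. Both statements are then reduced to standard calculus at differentiable points plus a limiting argument.

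For the sum rule, I would take an arbitrary $\bg \in \partial(g_1+g_2)(\bx)$ and write it, via Carath\'eodory, as a convex combination of limits $\lim_{n\to\infty}\nabla(g_1+g_2)(\bx_n^{(j)})$ along sequences $\bx_n^{(j)}\to\bx$. Since the set of nondifferentiable points of each $g_i$ has Lebesgue measure zero, a small perturbation of each $\bx_n^{(j)}$ lets me assume simultaneous differentiability of $g_1$ and $g_2$ along the sequence, so that $\nabla(g_1+g_2)(\bx_n^{(j)}) = \nabla g_1(\bx_n^{(j)}) + \nabla g_2(\bx_n^{(j)})$. The Lipschitz bounds keep both summand sequences in a compact set, so after passing to a subsequence each summand converges to some element of $\partial g_i(\bx)$. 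Taking the convex hull of the resulting decompositions yields $\bg \in \partial g_1(\bx) + \partial g_2(\bx)$.

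For the chain rule, the strategy is analogous. At points where $g_2$ is differentiable and $g_1$ is differentiable at $g_2(\bx_n)$, the ordinary chain rule gives $\nabla(g_1\circ g_2)(\bx_n) = g_1'(g_2(\bx_n))\,\nabla g_2(\bx_n)$, which is of the form $r_1\br_2$ with $r_1\in\partial g_1(g_2(\bx_n))$ and $\br_2\in\partial g_2(\bx_n)$. Passing to the limit along an approximating sequence and then invoking upper semicontinuity of the Clarke subdifferential places the limit inside $\partial g_1(g_2(\bx))$ and $\partial g_2(\bx)$ respectively, after which taking the convex hull gives the claimed inclusion.

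The main obstacle is the measure-theoretic subtlety in the chain rule: one needs to guarantee that the approximating sequence can be chosen so that $g_1$ is differentiable at $g_2(\bx_n)$, which is not automatic because the image of a null set under a Lipschitz map need not be null in general, so a direct perturbation argument requires care (one typically handles this by separating the differentiable and nondifferentiable regimes and using that $\partial g_1$ upper-semicontinuously contains all limits of $g_1'$ along any sequence in its domain of differentiability). Since both statements are classical results of Clarke calculus, a fully rigorous proof would simply invoke Proposition~2.3.3 and Theorem~2.3.9 of \cite{clarke1990optimization} as cited.
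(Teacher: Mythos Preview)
The paper does not prove this statement at all: it is recorded as a \emph{Fact} with a citation to Clarke's monograph and is used as a black box in the subsequent arguments. Your final sentence already identifies this, so in that sense your proposal is aligned with the paper's treatment.

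Your sketch of an actual proof via the Rademacher characterization is reasonable as an outline and goes beyond what the paper provides, but be aware that the sum-rule argument as written has a small gap: after passing to subsequences so that $\nabla g_1(\bx_n^{(j)})$ and $\nabla g_2(\bx_n^{(j)})$ each converge, you obtain $\bg$ as a convex combination of sums $\ba_j+\bb_j$ with $\ba_j\in\partial g_1(\bx)$ and $\bb_j\in\partial g_2(\bx)$, hence $\bg\in\mathrm{conv}\bigl(\partial g_1(\bx)+\partial g_2(\bx)\bigr)$; you then need the (easy) fact that the Minkowski sum of two convex sets is convex to drop the outer hull. For the chain rule you correctly flag the genuine subtlety about differentiability of $g_1$ along the image sequence; Clarke's own proof in Theorem~2.3.9 handles this via the generalized directional derivative rather than the limiting-gradient description, which sidesteps the measure-theoretic issue you raise.
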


\begin{fact}[{\cite[Theorem 10.29]{rockafellar2009variational}}]\label{fact:regularity_of_composition}
    If $F$ is regular, and $g$ is regular at all $F(x)$, then  $f(x) = g(F(x))$ is regular at all $x$.  
\end{fact}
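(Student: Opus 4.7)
The plan is to establish the two ingredients of Clarke regularity for $f = g \circ F$ at an arbitrary point $x$: first, existence of the ordinary directional derivative $f'(x;v)$ in every direction $v$, and second, the identity $f'(x;v) = f^\circ(x;v)$. The high-level strategy is to propagate the regularity hypothesis through a chain-rule computation on ordinary directional derivatives, using local Lipschitzness to control remainder terms, and then separately to bound $f^\circ(x;v)$ from above by a corresponding chain-rule expression on generalized directional derivatives. Once regularity of $F$ at $x$ and of $g$ at $F(x)$ collapses the outer and inner generalized derivatives to their ordinary counterparts, the two expressions coincide and regularity of $f$ follows.

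For existence of $f'(x;v)$, I would start from the first-order expansion $F(x + tv) = F(x) + t\,F'(x;v) + o(t)$, which is available because $F$, being regular, is directionally differentiable at $x$. Applying $g$ and using its local Lipschitz continuity to absorb the $o(t)$ perturbation of the argument, one obtains $f(x+tv) - f(x) = g\!\left(F(x) + t\,F'(x;v)\right) - g(F(x)) + o(t)$. Since $g$ is regular at $F(x)$ and hence directionally differentiable there, dividing by $t$ and letting $t \downarrow 0$ yields $f'(x;v) = g'(F(x); F'(x;v))$, so in particular the limit exists.

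For the identity $f'(x;v) = f^\circ(x;v)$, I would combine the inequality $f'(x;v) \le f^\circ(x;v)$ (which holds for any Lipschitz function) with a matching upper bound $f^\circ(x;v) \le g^\circ(F(x); F^\circ(x;v))$. The latter follows from \cref{fact:prop_clarke_subdifferential_calculus}: the inclusion $\partial f(x) \subseteq \mathrm{conv}\{r_1\,\mathbf{r}_2 : r_1 \in \partial g(F(x)),\ \mathbf{r}_2 \in \partial F(x)\}$ implies the corresponding inequality on support functions in direction $v$, which rearranges using the representation $f^\circ(x;v) = \max_{\mathbf{g}\in\partial f(x)}\langle \mathbf{g}, v\rangle$ to the claimed chain-rule bound. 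Regularity of $F$ at $x$ then replaces $F^\circ(x;v)$ by $F'(x;v)$, and regularity of $g$ at $F(x)$ replaces $g^\circ(F(x);\cdot)$ by $g'(F(x);\cdot)$, producing the sandwich $f^\circ(x;v) \le g'(F(x); F'(x;v)) = f'(x;v) \le f^\circ(x;v)$ and forcing equality.

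The main obstacle I anticipate is the chain-rule upper bound on $f^\circ$ when the inner map $F$ is vector-valued: the subdifferential product formula in \cref{fact:prop_clarke_subdifferential_calculus} is stated for a univariate outer factor, and rearranging $r_1\langle\mathbf{r}_2,v\rangle$ into $g^\circ(F(x);F^\circ(x;v))$ requires care about the sign of $r_1$ relative to the direction $F^\circ(x;v)$. I would resolve this by first treating the scalar-composition case consistent with the cited fact, and for genuinely vector-valued $F$ either appeal to a componentwise version of the inclusion or bypass the subdifferential route altogether, bounding the limsup in the definition of $f^\circ(x;v)$ by pushing perturbed trajectories $y + tv$ through $F$ and applying the regularity of $g$ at $F(x)$ directly via upper semicontinuity of the Clarke subdifferential.
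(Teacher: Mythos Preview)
The paper does not prove this statement at all: it is stated as a \emph{Fact} with a citation to \cite[Theorem 10.29]{rockafellar2009variational} and used as a black box in the proof of \cref{lem: choice of w i}. So there is no ``paper's proof'' to compare against; you are supplying an argument where the authors simply invoke the literature.

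On the merits of your outline: the first half (existence of $f'(x;v)$ via the expansion of $F$ and Lipschitzness of $g$) is standard and fine. The second half has the gap you yourself flag. Your route to $f^\circ(x;v)\le g^\circ(F(x);F^\circ(x;v))$ via \cref{fact:prop_clarke_subdifferential_calculus} only covers univariate $g$, and even then the support-function step $\max_{r_1,\br_2} r_1\langle\br_2,v\rangle \le g^\circ(F(x);F^\circ(x;v))$ breaks when $r_1$ can be negative, since for $r_1<0$ you need the \emph{lower} bound $\langle\br_2,v\rangle\ge -F^\circ(x;-v)$ rather than the upper bound $\le F^\circ(x;v)$. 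Your proposed fallback of ``pushing perturbed trajectories $y+tv$ through $F$ and using upper semicontinuity'' is the right instinct but is not a proof as written: the directional expansion of $F$ is at $x$, not at the varying basepoint $y$, so controlling $F(y+tv)-F(y)$ uniformly as $y\to x$ requires an additional argument (this is exactly where the full Rockafellar--Wets machinery, or a semismoothness-type hypothesis, enters).

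That said, for the single use the paper makes of this fact---composing the convex, nondecreasing outer function $\max\{\cdot,0\}$ with a scalar regular inner function---the outer subgradients $r_1$ lie in $[0,1]$ and the sign obstruction vanishes, so your argument does go through in that case.
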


\paragraph{Local algorithms.}

We consider iterative algorithms that have access to local information at queried points and proceed based on information gathered along these queries \cite{nemirovskiyudin1983}. 
Formally,
we call an oracle \emph{local} if for any point $\bx$ and any two functions $f,g$ that are equal over some neighborhood of $\bx$, the equation $\OO_{f}(\bx)=\OO_{g}(\bx)$ holds.
At every iteration $t\in\NN$, a local algorithm which aims to optimize an unknown objective $f:\reals^d\to\reals$
chooses an iterate $\bx_t\in\reals^d$, receives the local information $\OO_f(\bx_t)$, and proceeds to choose the next iterate $\bx_{t+1}$ as some (possibly random) mapping of all the oracle outputs seen thus far: $(\OO_f(\bx_1),\dots,\OO_f(\bx_t))$.
An important subclass of local algorithms are first-order algorithms, which utilize an oracle of the form $\OO_{f}(\bx)=(f(\bx),\bg_x)$ where $\bg_\bx\in\partial f(\bx)$ is some consistent choice of a subgradient.

\section{Our Main Result}\label{sec:lower-bound}

We now present our main theorem. Put simply, it states that local algorithms acting on regular Lipschitz functions cannot, in the worst case, guarantee meaningful local guarantees in terms of function value in sub-exponential time, even when all near-stationary points are  global minima.

For comparison, recall that for smooth objectives $f:\reals^d\to\reals$ and sufficiently small $\delta>0$,
gradient descent
gets after $T$ steps to a point
$\bx_T\in\reals^d$ such that
$f(\bx_T)\leq \min_{\bz\in B(\bx_T,\delta)}f(\bz)+O(\frac{\delta}{\sqrt{T}})$, namely, a point locally competitive with nearby function values up to a factor which vanishes in a dimension-free manner.
Our main result precludes precisely that when smoothness is absent.

\begin{theorem} \label{thm: stat hardness}
    For any (possibly randomized) local algorithm $\Acal$ and any $T,d\in\NN$, there is a function $f:\reals^d\to\reals$ so that for some absolute constant $c\geq\tfrac{1}{100}$,  
    the following properties hold: 
    \begin{thmenum}
    \compresslist{
        \item The function $f$ is $1$-Lipschitz and Clarke regular, 
        $f(\bzero)-\inf f\leq 2$,
        and all $c$-stationary points of $f$ are global minima.
        \item\label[theorem]{thm:main result part ii} With probability at least $1-2T\exp(-d/36)$, for all $t\in[T]$ and any $\delta\in(0,1]$, the following inequality holds:
\begin{equation} \label{eq: large decrease}
    \min_{\bz\in B(\bx_t,\delta)}f(\bz)
    <f(\bx_t)-\delta c~.
\end{equation}
    }
    \end{thmenum}
\end{theorem}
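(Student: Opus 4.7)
The plan is to proceed via the probabilistic method: we construct a distribution over hard functions $\{f_\bv\}_{\bv\in\Sbb^{d-1}}$ parameterized by a uniformly random unit direction $\bv$, and show that for any fixed (possibly randomized) local algorithm $\Acal$, the conclusion holds on average over $\bv$ with probability at least $1-2T\exp(-d/36)$; standard derandomization then yields a single deterministic $f$ satisfying the theorem's conclusion. Each instance $f_\bv$ is designed around a \emph{decoy and hidden trench} paradigm: a deterministic $\bv$-independent decoy $g:\reals^d\to\reals$ dictates the local oracle responses throughout most of $\reals^d$, while a $\bv$-dependent trench supported in a thin angular cone $C_\bv=\{\bx : \langle\bv,\bar{\bx}\rangle\geq 1-\gamma\}$ contains the global minimum. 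The two pieces are combined by a regularity- and Lipschitz-preserving operation (e.g.\ pointwise maximum, invoking \ref{fact:prop_clarke_regularity_calculus}), so that outside $C_\bv$ the function equals $g$ and its subgradients carry no information about $\bv$, while inside $C_\bv$ the function dips to a global minimum at depth at least $2$ below $f_\bv(\bzero)$.

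The core probabilistic argument is inductive. Conditional on the event $E_t:=\{\bx_1,\dots,\bx_t\notin C_\bv\}$, every oracle response seen by $\Acal$ so far was determined solely by $g$, so $\bx_{t+1}$---a measurable function of $\Acal$'s internal randomness and the past $\bv$-independent responses---is itself independent of $\bv$ conditional on $E_t$. Standard spherical concentration then yields $\Pr_\bv[\bx_{t+1}\in C_\bv\mid E_t]\leq 2\exp(-d/36)$, provided the aperture $\gamma$ is chosen to calibrate the constant $1/36$. A union bound over $t\in[T]$ gives $\Pr[E_T]\geq 1-2T\exp(-d/36)$. On the good event $E_T$, property \ref{eq: large decrease} follows at every iterate: by the uniform $c$-descent built into $g$, there is a unit direction $\bu$ at $\bx_t$ along which the decoy decreases at rate at least $c$, and by choosing $\gamma$ small enough that a unit neighbourhood of any point outside $C_\bv$ remains outside $C_\bv$, the segment $\{\bx_t-s\bu : s\in[0,\delta]\}$ stays in the decoy region for all $\delta\in(0,1]$, so $f_\bv(\bx_t-\delta\bu)=g(\bx_t-\delta\bu)\leq g(\bx_t)-c\delta=f_\bv(\bx_t)-c\delta$.

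Part (i) is then verified by direct inspection: $1$-Lipschitzness and Clarke regularity follow from the pieces and the combining operation (via \ref{fact:prop_clarke_regularity_calculus}); the bound $f_\bv(\bzero)-\inf f_\bv\leq 2$ is encoded into the trench's depth; and the property that every $c$-stationary point is a global minimum is ensured by designing $g$ so that $\|\bar{\partial} g(\bx)\|>c$ on all of $\reals^d\setminus C_\bv$, forcing every $c$-stationary point of $f_\bv$ into $C_\bv$, where it must coincide with the trench bottom. The main obstacle is calibrating the decoy and the trench simultaneously: $g$ must be $1$-Lipschitz and regular with $\|\bar{\partial} g\|>c$ everywhere on $\reals^d\setminus C_\bv$ while still being consistent with $f_\bv$ being bounded below; the trench must dominate $g$ precisely inside $C_\bv$, attain the required depth, and glue to $g$ at $\partial C_\bv$ without introducing spurious $c$-stationary points or breaking regularity. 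Verifying that this geometric balancing can be achieved in a Clarke-regular way, and that the oracle responses in the decoy region truly leak nothing about $\bv$ through any indirect channel, is the technical crux of the argument.
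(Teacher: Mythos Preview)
Your plan has a genuine structural gap at the point you yourself flag as the crux. You require the decoy $g$ to be $\bv$-independent, $1$-Lipschitz, Clarke regular, with $\norm{\bar\partial g(\bx)}>c$ on all of $\reals^d\setminus C_\bv$, while $f_\bv$ is bounded below with $f_\bv(\bzero)-\inf f_\bv\le 2$. These constraints are incompatible in the way you combine them. Since $g$ does not depend on $\bv$, the set where $\norm{\bar\partial g}\le c$ must lie inside $\bigcap_\bv C_\bv$, which for your apex-at-origin cones is just $\{\bzero\}$; hence the only candidate for $g$'s minimum is the origin, and nothing prevents the adversary's favourite algorithm from querying $\bzero$ on its first step. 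At $\bzero$ you face a dichotomy: either $f_\bv\equiv g$ on a neighbourhood of $\bzero$, in which case $\bzero$ is a local minimum of $f_\bv$ and \eqref{eq: large decrease} fails outright for small $\delta$; or the trench already modifies $f_\bv$ near $\bzero$, in which case the local oracle there depends on $\bv$ and your independence argument collapses. Two smaller points compound this: combining via pointwise $\max$ cannot produce a global minimum strictly below $\inf g$, so your trench cannot ``dip'' as described; and the claim that one can choose $\gamma$ so that a unit neighbourhood of any point outside $C_\bv$ stays outside $C_\bv$ is false for any cone with an apex (points near the apex violate it).

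The paper circumvents this with a \emph{two-level} hiding scheme. First, the minimizer $\bx^*$ of the convex base $h$ is itself hidden from every local algorithm by a separate one-dimensional lower bound (\Cref{lem: basic hard}); only then is a random-direction trench attached at $\bx^*$ via $f_{\bw,\mu}(\bx)=\max\{h(\bx)-\sigma_\mu(q(\bx-\bx^*)),0\}$ with $h\ge 1$, so that the $\max$ with $0$ manufactures genuine global minima along the trench without making $\bx^*$ one. For part~(ii), rather than arguing that a descent segment remains in a decoy region, the paper shows all iterates stay at distance $\ge 1$ from every $c$-stationary point (since $f(\bx_t)\ge 1$ while $c$-stationary points have $f=0$, and $f$ is $1$-Lipschitz) and then applies a clean subgradient-flow lemma (\Cref{lem: stat to min}) that turns ``no $c$-stationary point in $B(\bx_t,1)$'' directly into \eqref{eq: large decrease} using Clarke regularity---no claim about neighbourhoods avoiding the hidden region is needed.
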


To contextualize the result in \cref{thm: stat hardness}, we further note that under the stated 1-Lipschitzness condition,
for any $\delta>0$, the local decrease $f(\bx_t)-\min_{\bz\in B(\bx_t,\delta)}f(\bz)$ can be \emph{at most} $\delta$.
Thus, the theorem shows that unless $T\gtrsim\exp(\Omega(d))$, all iterates $\bx_t$ suffer from a nearly-maximal local decrease; as a result, none of these iterates can be regarded as approximate local minima. 
On the other hand, it is clearly the case that with exponential dimension dependence, a trivial grid search algorithm can guarantee getting anywhere (i.e. over a discretization of a bounded domain), and in particular can achieve approximate local optimality somewhere along the algorithm's trajectory.
Hence \cref{thm: stat hardness} can be seen as asserting that nothing substantially better than a trivial strategy is possible.

The proof of \cref{thm: stat hardness} consists of constructing a variant of the function which was previously used to prove a strong lower bound on the complexity of getting near stationary points of Lipschitz functions \cite{kornowski2022oracle}.
Our analysis further reveals that our constructed function   satisfies that all near-stationary points are in fact global minima.
The prior construction by \cite{kornowski2022oracle} does not apply to Clarke regular functions, which is an important consideration for our purposes in two aspects. First, for the sake of interest of the derived result,  the upper bounds in the context of local minimality, as discussed throughout the introduction, crucially rely on this property.\footnote{We remark that \citet{tian2021hardness} provide a Clarke regular variant of this construction, but their construction implies a lower bound only for \emph{deterministic} algorithms --- as a result, this
fails to address the aforementioned algorithms which are based on the perturbed gradient descent dynamics \eqref{eq:PGD}.} Second, Clarke regularity implies the so called ``Lyapunov property'', asserting that the subgradient flow decreases the function value 
proportionally to the subgradient norm
(see \cite{daniilidis2020pathological} for an elaborate discussion on this property and function classes for which it holds).
Therefore, having established a Clarke regular function for which the algorithms' iterates are nowhere near a point with sub-constant subgradient norm, the Lyapunov property further ensures that by tracking the subgradient flow, the local decrease in function value is significant, hence implying our desired lower bound in terms of function value.

\section{Proof of Our Main Result (\cref{thm: stat hardness})}\label{sec:proof}

In this section, we   prove our main result (\cref{thm: stat hardness}). We first state \Cref{lem: basic hard} (deferring its proof to \Cref{sec:proof of lem basic hard}), a technical result on one-dimensional functions that we crucially use for the construction of our hard instance for \cref{thm: stat hardness}. 

\begin{restatable}{proposition}{propOneDimFnProperties} \label{lem: basic hard}
For any $\gamma>0$ and $T\in\NN$, there exists $\rho>0$ so that the following holds:
For any (possibly randomized) local algorithm $\Acal$, there exists a function $\bar{h}:\reals\to[2,\infty)$ such that
\begin{lemmenum}
\compresslist{
    \item\label[proposition]{lem: basic hard i} $\bar{h}$ is $1$-Lipschitz, convex, and satisfies $\bar{h}(0)\leq 3.$ 
    \item\label[proposition]{lem: basic hard ii} $\bar{h}$ has a unique minimizer $x^*\in(0,1)$, and
    $\forall x\neq x^*:\left|\bar{\partial}\bar{h}(x)\right|\geq\tfrac{1}{8}$.
    \item\label[proposition]{lem: basic hard iii} $\Pr_\Acal[\exists t\in[T]:
    |x_t-x^*|\leq\rho
    ]<\gamma$.
}
\end{lemmenum}
\end{restatable}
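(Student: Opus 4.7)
The plan is to prove Proposition~\ref{lem: basic hard} via Yao's minimax principle combined with the construction of a family of convex, piecewise-linear 1-Lipschitz functions \(\{\bar{h}_\sigma\}_{\sigma\in S}\) indexed by a finite set \(S\) of cardinality depending on \(\gamma\) and \(T\). Each \(\bar h_\sigma\) will be designed to satisfy properties (i) and (ii), with the unique minimizer \(x^*_\sigma\in(0,1)\) varying across \(\sigma\). Drawing \(\sigma\) uniformly at random on \(S\) and invoking Yao's principle, it suffices to prove that for any \emph{deterministic} local algorithm issuing \(T\) queries, the probability (over \(\sigma\)) that some iterate lands within \(\rho\) of \(x^*_\sigma\) is below \(\gamma\); an averaging argument then produces a specific \(\bar h_\sigma\) against which any randomized algorithm fails.

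An important preliminary observation is that the simplest candidate family, the V-functions \(\bar h_\sigma(x)=2+\tfrac{1}{8}|x-x^*_\sigma|\), is inadequate: the most informative local oracle, returning \((\bar h_\sigma(x),\bar\partial \bar h_\sigma(x))\), already pins down \(x^*_\sigma\) algebraically from \(|x-x^*_\sigma|=8(\bar h_\sigma(x)-2)\) together with the sign of \(\bar\partial\bar h_\sigma(x)\). The construction therefore needs to incorporate additional hidden parameters. A natural candidate is a convex piecewise-linear function with many linear pieces whose slopes traverse \([-1,-1/8]\) to the left of \(x^*_\sigma\) and \([1/8,1]\) to the right (respecting convexity together with the subgradient lower bound \(|\bar\partial\bar h_\sigma|\geq 1/8\) off the minimizer), and whose kink positions encode the parameter \(\sigma\). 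A single query then reveals essentially only the slope and value of the specific piece containing the queried point, yielding only a bounded amount of information about \(\sigma\).

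Given such a family, the proof concludes by a standard information-theoretic counting argument: the subset of \(\sigma\in S\) that is consistent with the algorithm's observed oracle responses after \(T\) queries has cardinality at least \(|S|/\kappa^T\) for some constant \(\kappa\) depending on the complexity per query. Taking \(|S|\) sufficiently large relative to \(\kappa^T\,T/\gamma\) and \(\rho\) smaller than the minimal spacing between the candidate minimizers \(\{x^*_\sigma\}_{\sigma\in S}\), a union bound over the \(T\) iterates gives \(\Pr_\sigma[\exists t\in[T]:|x_t-x^*_\sigma|\leq\rho]<\gamma\) as required.

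The principal technical obstacle lies in realizing the family \(\{\bar h_\sigma\}\) with both the structural requirements (convexity, 1-Lipschitzness, subgradient lower bound, and value constraints \(\bar h\geq 2\), \(\bar h(0)\leq 3\)) \emph{and} the information-theoretic obfuscation. Convex 1D functions have monotone subgradients, so a priori the algorithm can binary-search via subgradient signs, and local behaviour is rigidly tied to global behaviour --- unlike the nonconvex constructions of \cite{kornowski2022oracle,tian2021hardness}, where localized spike-type obstructions can be inserted to hide the stationary point. The construction must therefore encode \(\sigma\) throughout the function in a manner compatible with both convexity and the strict subgradient bound, yet leak only a bounded amount of information per query; simultaneously satisfying \(\bar h(0)\leq 3\) and \(\bar h\geq 2\) imposes the additional normalization that the hidden encoding must respect.
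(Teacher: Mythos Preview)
Your high-level strategy---randomize over a finite family $\{\bar h_\sigma\}$ of convex piecewise-linear functions and invoke Yao's principle---is exactly the paper's approach, and you correctly identify that the entire difficulty lies in constructing a family that is simultaneously convex, has the subgradient lower bound, and leaks only bounded information per query. But your proposal stops precisely where the work begins: you do not give the construction, and the assertion that ``a single query reveals essentially only the slope and value of the specific piece'' is not a proof. For a generic convex piecewise-linear function, the value and slope at one point constrain the function globally via the supporting-line inequality, so an adversary cannot simply place kinks arbitrarily and hope the algorithm does not learn much. Your final paragraph concedes this; the proposal is a statement of the problem rather than a solution.

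The paper's resolution is a \emph{hierarchical} encoding. For $\sigma\in\{0,1\}^N$ one builds $\bar h_\sigma$ together with a nested chain of intervals $\mathbf{I}_{\sigma_1}\supset\mathbf{I}_{\sigma_1\sigma_2}\supset\cdots\supset\mathbf{I}_{\sigma_1\cdots\sigma_N}\ni x^*_\sigma$ with the crucial property that $\bar h_\sigma$ restricted to the complement of $\mathbf{I}_{\sigma_1\cdots\sigma_k}$ depends only on $\sigma_1,\ldots,\sigma_k$. Convexity and the slope bounds are enforced by parametrizing the slopes of successive pieces as $\pm\cot(\theta^{(i)})$ for an increasing sequence $\theta^{(i)}\in[\arctan 1,\arctan 8]$, so that slopes are monotone across pieces and lie in $[1/8,1]$ in absolute value; the intervals and intercepts are then determined by continuity. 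The probability bound is obtained not by your proposed cardinality count ``$|S|/\kappa^T$ consistent $\sigma$ remain'' but by a progress-tracking argument: letting $Z_t$ be the deepest level $k$ such that some iterate has entered $\mathbf{I}_{\sigma_1\cdots\sigma_k}$, one has $\Pr[Z_{t+1}-Z_t\geq m]\le 2^{-(m-1)}$, hence $\mathbb{E}[Z_T]=O(T)$, and Markov's inequality yields the claim. Your counting heuristic is related in spirit but would not directly accommodate the event that a single lucky query jumps many nesting levels at once.
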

\noindent The above proposition considers the class of Lipschitz, convex functions bounded from below and with a certain minimum slope at all points that are not the function minimizers. Then, for any randomized local algorithm, there exists a function in  this function class, such that, with high probability, the said algorithm cannot reach its local minimum. 

 By embedding the one-dimensional hard instance of \Cref{lem: basic hard} into higher dimensions, a simple reduction enables us to extend \Cref{lem: basic hard}  to functions beyond merely one dimension.

\begin{lemma} \label{lem: d>=2 hard}
For any $\gamma>0,~T\in\NN$, and $~d\geq 2$, there exists $\rho>0$ (which depends only on $\gamma,T$) so that the following holds:
For any (possibly randomized) local algorithm $\Acal$, there exists a function $\bar{h}:\reals\to[1,\infty)$ 
satisfying the following properties. 
\begin{lemmenum}
\compresslist{
    \item\label[lemma]{item:lipsch_conv_hzero} $\bar{h}$ is $\tfrac{1}{2}$-Lipschitz, convex, and satisfies $\bar{h}(0)\leq \tfrac{3}{2}$. 
    \item\label[lemma]{item:unique_min_min_slope} $\bar{h}$ has a unique minimizer 
    $x^*\in(0,1)$, and
    $\forall x\neq x^*:\left|\bar{\partial}\bar{h}(x)\right|\geq\tfrac{1}{16}$.
    \item\label[lemma]{lemma_item_prob_premain_proof} When applying $\Acal$ to $h(\bx):=\tfrac{1}{32}\norm{\bx_{1:d-1}}+\bar{h}(x_d)$, we have, for $\bx^*=(\bzero_{d-1},x^*)$,   that
    \[
    \Pr_\Acal[\exists t\in[T]:
     \norm{\bx_t-\bx^*}\leq\rho
    ]<\gamma~.
    \]
}
\end{lemmenum}

\end{lemma}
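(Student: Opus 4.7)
The approach is a direct reduction from \Cref{lem: basic hard}: embed the one-dimensional hard instance into the last coordinate, and handle the first $d-1$ coordinates with a fully known convex penalty. Given $\gamma > 0$ and $T \in \NN$, take $\rho > 0$ as supplied by \Cref{lem: basic hard}. For any local algorithm $\Acal$ on $\reals^d$, the function $\bar{h}$ is obtained by halving the one-dimensional hard function produced by \Cref{lem: basic hard} applied to a $1$-dimensional simulator of $\Acal$.

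\textbf{Simulation.} The key step is to build, from $\Acal$, a one-dimensional local algorithm $\tilde{\Acal}$ whose $t$-th query $y_t$ satisfies $y_t = x_{t,d}$, where $\bx_t$ is $\Acal$'s $t$-th query. Whenever $\Acal$ issues $\bx_t \in \reals^d$, $\tilde{\Acal}$ queries its own oracle at $x_{t,d}$; using the local information returned for a candidate $g : \reals \to \reals$, together with the explicitly computable penalty $\bx \mapsto \tfrac{1}{32}\|\bx_{1:d-1}\|$, it synthesizes a local oracle response at $\bx_t$ for the $d$-dimensional function $h_g(\bx) := \tfrac{1}{32}\|\bx_{1:d-1}\| + \tfrac{1}{2}g(x_d)$, and feeds this back to $\Acal$. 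This is well-posed because $h_g$ depends on $g$ only through its germ at $x_{t,d}$: if $\tilde{g}$ agrees with $g$ on a neighborhood of $x_{t,d}$, then $h_{\tilde{g}}$ agrees with $h_g$ on a neighborhood of $\bx_t$, so the locality axiom forces the oracle responses at $\bx_t$ to coincide. Hence $\tilde{\Acal}$ is a bona fide local algorithm.

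\textbf{Conclusion.} Applying \Cref{lem: basic hard} to $\tilde{\Acal}$ produces $\bar{h}_0 : \reals \to [2, \infty)$ with the three stated properties. Setting $\bar{h} := \tfrac{1}{2}\bar{h}_0$ gives (i) and (ii) immediately by scaling: $\bar{h}$ inherits convexity and the unique minimizer $x^* \in (0,1)$, becomes $\tfrac{1}{2}$-Lipschitz, takes values in $[1,\infty)$, satisfies $\bar{h}(0) \leq \tfrac{3}{2}$, and has $|\bar{\partial}\bar{h}(x)| = \tfrac{1}{2}|\bar{\partial}\bar{h}_0(x)| \geq \tfrac{1}{16}$ for all $x \neq x^*$. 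For (iii), the function $h$ in the lemma equals $h_{\bar{h}_0}$, so by construction $y_t = x_{t,d}$. \Cref{lem: basic hard}(iii) then yields $\Pr[\exists t \in [T]: |y_t - x^*| \leq \rho] < \gamma$, and since $\bx^* = (\bzero_{d-1}, x^*)$ gives $\|\bx_t - \bx^*\| \geq |x_{t,d} - x^*|$, the event $\{\|\bx_t - \bx^*\| \leq \rho\}$ is contained in $\{|y_t - x^*| \leq \rho\}$, producing the claimed bound. The only delicate point is the simulation---verifying that the $d$-dimensional local oracle response at $\bx_t$ is truly determined by the $1$-dimensional local information at $x_{t,d}$---which, as argued above, follows from the germ-based definition of locality combined with the separable structure of $h$.
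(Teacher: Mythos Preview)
Your proposal is correct and follows essentially the same approach as the paper: reduce to \Cref{lem: basic hard} by simulating a one-dimensional local algorithm from $\Acal$'s last-coordinate trajectory, set $\bar h=\tfrac12$ of the resulting hard function, and conclude via $\norm{\bx_t-\bx^*}\ge |x_{t,d}-x^*|$. The only cosmetic difference is that the paper phrases the simulation as ``strengthen $\Acal$'s oracle to reveal the entire slice $\{z_d=x_d\}$ and then project,'' whereas you phrase it as ``let $\tilde\Acal$ use a germ-level one-dimensional oracle to synthesize $\Acal$'s responses''; both exploit that \Cref{lem: basic hard} applies to any local oracle, so either direction of strengthening is free.
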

\begin{proof}[Proof of \Cref{lem: d>=2 hard}] 
Since our goal is to provide a lower bound 
when applying the local algorithm $\Acal$, we can assume without loss of generality that $\Acal$ has access to an even stronger oracle of the form 
\[
\overline{\OO}(\bx)=\left(\left\{h(\bz_{1:d-1},x_d)\mid \bz_{1:d-1}\in\reals^{d-1}\right\},\OO_{h_{\mathrm{1d}}}(x_d)\right),
\]
where $h_{\mathrm{1d}}$  is a one-dimensional function we will soon choose. Note that oracle $\overline{\OO}$ as defined here provides a full description of the function $h$ over the affine subspace $\{\bz\mid z_d=x_d\}$ in addition to the local information with respect to the last coordinate.\footnote{As an example, in the canonical case of a (sub)gradient oracle, $\overline{\OO}$ provides the partial derivatives with respect to the first $d-1$ coordinates at all points, while  revealing the partial derivative only with respect to the last coordinate at the queried point.}
Moreover, given the algorithm $\Acal$ with such an oracle, one can simulate a local algorithm $\Acal'$ which optimizes
the one-dimensional function $h_{\mathrm{1d}}$ by restricting to the $d$'th coordinates $((\bx_{t})_d)_{t\in\NN}$ of the iterates $(\bx_t)_{t\in \NN}$.

We let $h_{\mathrm{1d}}$ be the one-dimensional function given by \Cref{lem: basic hard} when applied to $\gamma,T,\Acal'$. With this choice of  $h_{\mathrm{1d}}$, we let  $\bar{h}=\frac{1}{2}h_{\mathrm{1d}}$. Then, \Cref{item:lipsch_conv_hzero}  and \Cref{item:unique_min_min_slope} are immediate by \cref{lem: basic hard i} and \cref{lem: basic hard ii}. We also have $\bar{h}:\reals\mapsto[1,\infty)$ from the range $[2,\infty)$ of $h_{\mathrm{1d}}$  from \Cref{lem: basic hard}.  Finally, by combining the fact that  $\Acal$ can simulate $\Acal'$ and using \cref{lem: basic hard iii} for $\Acal'$, we have the following inequality, which establishes  \Cref{lemma_item_prob_premain_proof}: 
\[
\Pr_\Acal[\exists t\in[T]:\norm{\bx_t-\bx^*}\leq\rho]
\leq \Pr_{\Acal'}[\exists t\in[T]:|(\bx_t)_d-x^*|\leq\rho]
< \gamma~.
\] 
\end{proof}

\noindent For our subsequent proofs, we use the setup described next, in \cref{def:fwu}. 

\begin{definition}\label{def:fwu}
We let $\Acal$ be a local algorithm, $T,d\geq 2$,
and set $\gamma:=T\exp(-d/36)$. We denote by ${h}:\reals^d\to[1,\infty),~\bx^*\in\reals^d,~\rho>0$ the function, minimizer, and positive constant, respectively, given by \cref{lemma_item_prob_premain_proof} when applied to $\Acal,T,d$.
Given any $\bw\in\reals^d\setminus\{\bzero\}$ and $\mu>0$, we denote $\bar{\bw}:=\tfrac{\bw}{\norm{\bw}}$ and construct 
\[
f_{\bw,\mu}(\bx):=\max\left\{h(\bx)-\sigma_{\mu}( q(\bx-\bx^*)),0\right\},\numberthis\label{eq:def_fwu}
\]
where  $q:\reals^d\to\reals$ 
 and $\sigma_\mu: \reals\to\reals$ are defined as follows:
\[ 
q(\bx) := \inner{\bar{\bw},\bx+\bw}-\tfrac{1}{2}\norm{\bx+\bw}\text{ and } \sigma_\mu(z):=\begin{cases}
    0, & z\leq 0 \\
    \tfrac{z^2}{8\mu}, & z\in(0,\mu] \\
    \tfrac{z}{4}-\tfrac{\mu}{8}, & z>\mu
\end{cases}. \numberthis\label{def:q_and_sigma_mu}
\]
\end{definition}

\noindent We need the following technical result about $f_{\bw, \mu}$, the proof of which we defer to \Cref{sec:proof of lem with many cases}.

\begin{restatable}{lemma}{lemmaWithManyCases} \label{lem: choice of w}
    For the setup in \cref{def:fwu} and $\bw$ such that $\bw\perp\be_d$ and $\|\bw\|=1000\mu$, 
    the following hold:
    \begin{lemmenum}
    \compresslist{
        \item\label[lemma]{lem: choice of w i} $f_{\bw,\mu}$ is non-negative, $1$-Lipschitz, and Clarke regular. 
        \item\label[lemma]{lem: choice of w ii} For $c= \tfrac{1}{100}$, any $c$-stationary point $\bx$ of $f_{\bw,\mu}$ satisfies $f_{\bw,\mu}(\bx)=0$. In particular, any $c$-stationary point of $f_{\bw,\mu}$ is a global minimum. 
        \item\label[lemma]{lem: choice of w iii} There exist $\bw\in\reals^d,~\mu>0$  
        such that applying $\Acal$ to $f_{\bw,\mu}$ satisfies
        \[
        \Pr_\Acal[\exists t\in[T]:f_{\bw,\mu}(\bx_t)<1]<2\gamma~.
        \] 
    }
    \end{lemmenum}
\end{restatable}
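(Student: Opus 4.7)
The plan has three components: parts (i) and (iii) are relatively direct given the setup, while part (ii) requires the most care.

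For part (i), $1$-Lipschitzness follows by combining: $h$ is $\tfrac{1}{2}$-Lipschitz, $\sigma_\mu$ is $\tfrac{1}{4}$-Lipschitz (since $\sigma_\mu'(z)\in[0,\tfrac{1}{4}]$), and $q$ is $\tfrac{3}{2}$-Lipschitz (from $\|\nabla q\|=\|\bar\bw-\tfrac{1}{2}\hat\bv\|\leq\tfrac{3}{2}$), giving $\tfrac{7}{8}$ for $h-\sigma_\mu(q(\cdot-\bx^*))$, and the outer $\max\{\cdot,0\}$ preserves Lipschitzness. For Clarke regularity, the key step is that $\sigma_\mu\circ q(\cdot-\bx^*)$ is globally $C^1$: away from $\bx^*-\bw$ (the only non-smooth point of $q$) both factors are smooth, and at $\bx^*-\bw$, the estimate $|q(\bx-\bx^*)|\leq\tfrac{3}{2}\|\bx-(\bx^*-\bw)\|$ combined with $\sigma_\mu(z)\leq z^2/(8\mu)$ for $z\in[0,\mu]$ gives $\sigma_\mu(q(\bx-\bx^*))=O(\|\bx-(\bx^*-\bw)\|^2)$, yielding differentiability with vanishing gradient and gradient continuity. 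Then $h+(-\sigma_\mu(q(\cdot-\bx^*)))$ is regular by \Cref{fact:prop_clarke_regularity_calculus}(iii), and the convex outer map $z\mapsto\max\{z,0\}$ preserves regularity by \Cref{fact:regularity_of_composition}.

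For part (ii), on the open set $\{f_{\bw,\mu}>0\}$ we have $\partial f_{\bw,\mu}=\partial h-\sigma_\mu'(q)\nabla q(\cdot-\bx^*)$ by the $C^1$ sum rule, and I split into three cases by $q(\bx-\bx^*)$. \emph{Case (a):} $q\leq 0$. Then $\sigma_\mu(q)\equiv 0$ locally (with $\sigma_\mu'(0)=0$ smoothly absorbing the cone boundary), so $\partial f_{\bw,\mu}=\partial h$; since $q(\bzero)>0$ forces $\bx\neq\bx^*$, the decomposition $h(\bx)=\tfrac{1}{32}\|\bx_{1:d-1}\|+\bar h(x_d)$ combined with $|\bar\partial\bar h(x_d)|\geq\tfrac{1}{16}$ from \Cref{item:unique_min_min_slope} yields $\|\bar\partial h(\bx)\|\geq\tfrac{1}{32}>c$. \emph{Case (b):} $q\geq\mu/2$. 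The directional derivative satisfies $f'(\bx;\bar\bw)\leq\tfrac{1}{32}-\sigma_\mu'(q)\bigl(1-\tfrac{1}{2}\inner{\bar\bw,\hat\bv}\bigr)\leq\tfrac{1}{32}-\tfrac{1}{16}=-\tfrac{1}{32}$ (using $\bar\bw_d=0$ to bound $h'(\bx;\bar\bw)\leq\tfrac{1}{32}$, $\sigma_\mu'(q)\geq\tfrac{1}{8}$, and $q'(\cdot;\bar\bw)\geq\tfrac{1}{2}$), and the identity $\|\bar\partial f\|=-\min_{\|\bv\|=1}f'(\bx;\bv)$ valid for regular $f$ gives $\|\bar\partial f\|\geq\tfrac{1}{32}>c$. \emph{Case (c):} $q\in(0,\mu/2)$. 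The distance from $\sigma_\mu'(q)\nabla q$ to the structured set $\partial h(\bx)$ is lower bounded by a careful geometric argument exploiting $\bar\bw\perp\be_d$ (which decouples the $d$-th coordinate analysis) and the calibration $\|\bw\|=1000\mu$ (which forces $\bx$ in this transition region to have $\|\bx_{1:d-1}\|\gtrsim\sqrt{3}\|\bw\|$ and prevents alignment between $\nabla q=\bar\bw-\tfrac{1}{2}\hat\bv$ and $\tfrac{1}{32}\overline{\bx_{1:d-1}}$).

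For part (iii), I apply Yao's principle with $\bar\bw$ uniform on the unit sphere in the first $d-1$ coordinates and $\mu<\rho/1500$ (where $\rho$ is from \Cref{def:fwu}). The key observation is $f_{\bw,\mu}\equiv h$ on $\{q\leq 0\}$ (independent of $\bar\bw$), so conditional on $\bx_1,\ldots,\bx_{t-1}\in\{q\leq 0\}$, the oracle conveys no information about $\bar\bw$ and $\bx_t$ is independent of $\bar\bw$. The event $\{q(\bx_t-\bx^*)>0\}$ reduces, after squaring and rearranging, to $\inner{\bar\bw,(\bx_t-\bx^*)_{1:d-1}}>\tfrac{1}{2}\|\bx_t-\bx^*\|-750\mu$, which, combined with $\|\bx_t-\bx^*\|\geq\rho>1500\mu$ (holding with probability $\geq 1-\gamma$ via \Cref{lemma_item_prob_premain_proof}), has probability at most $\exp(-(d-2)/8)\leq\exp(-d/36)$ by sphere concentration. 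A union bound over $t\in[T]$ plus the $\gamma$ contribution yields total failure $\leq\gamma+T\exp(-d/36)=2\gamma$; under the good event, $f_{\bw,\mu}(\bx_t)=h(\bx_t)\geq 1$ for all $t$, and a standard averaging argument produces a deterministic $\bar\bw$ achieving the bound.

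The main obstacle is case (c) of part (ii): in the transition region $q\in(0,\mu/2)$, the simple triangle bound $\|\bar\partial f\|\geq\|\bar\partial h\|-\|\sigma_\mu'(q)\nabla q\|$ becomes vacuous since the perturbation norm can be comparable to $\|\bar\partial h\|=\tfrac{1}{32}$. Resolving this requires exploiting the specific geometric structure of $\nabla q=\bar\bw-\tfrac{1}{2}\hat\bv$ together with the coordinate-separability of $\partial h$ and the calibration $\|\bw\|=1000\mu$ to rule out subgradient cancellation.
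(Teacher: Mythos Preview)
Your treatment of parts (i) and (iii) is correct and essentially matches the paper's. In part (ii), your three-way split by the value of $q(\bx-\bx^*)$ is different from the paper's seven-case decomposition by the position of $\bx$, and your cases (a) and (b) are handled correctly; in fact, the directional-derivative argument in case (b) (testing against $\bar\bw$ and using $\sigma_\mu'(q)\ge\tfrac18$, $q'(\,\cdot\,;\bar\bw)\ge\tfrac12$, $h'(\,\cdot\,;\bar\bw)\le\tfrac{1}{32}$) is a clean unified replacement for several of the paper's cases.

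The gap is in case (c). Your key geometric claim --- that in the transition region $q\in(0,\mu/2)$ one has $\|\bx_{1:d-1}\|\gtrsim\sqrt3\,\|\bw\|$ --- is false. Take $\by:=\bx-\bx^*=-\bw+\epsilon\bar\bw$ with $0<\epsilon<\mu$: then $q(\by)=\tfrac{\epsilon}{2}\in(0,\mu/2)$, yet $\|\by\|=\|\bw\|-\epsilon\approx\|\bw\|$, not $\sqrt3\,\|\bw\|$, and $\overline{\by_{1:d-1}}=-\bar\bw$ is exactly antiparallel to $\bar\bw$. Thus the ``non-alignment'' heuristic breaks down precisely in the vicinity of $-\bw$, and more generally the transition region is not a thin cone at a fixed angle from $\bar\bw$: it includes both points near $-\bw$ (where $\|\by+\bw\|$ is small and $\overline{\by+\bw}$ is unconstrained) and points far away with $\langle\bar\bw,\overline{\by+\bw}\rangle\approx\tfrac12$.

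The paper resolves this by a finer subdivision that your sketch does not anticipate: it first separates $x_d\neq x^*$ (projecting onto $\be_d$ and using $|\bar\partial\bar h|\ge\tfrac{1}{16}$), and then, within $x_d=x^*$, splits according to whether $\|\by+\bw\|\le 10\mu$ or $\|\by+\bw\|\ge 10\mu$. The near-$(-\bw)$ subcase requires showing $-\bar\bw^\top\bar\by\ge\sqrt{1-100\mu^2/\|\bw\|^2}$ (so $\by$ is nearly antiparallel to $\bw$) and then testing the subgradient against $-\bar\bw$; the far subcase requires a separate orthogonal-projection argument onto $\mathrm{span}(\bw)^\perp$ followed by a contradiction when the subgradient norm is assumed small. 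None of this structure is visible in your sketch, and the single $\sqrt3\,\|\bw\|$ claim cannot substitute for it.
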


\begin{remark}\label{rem:choice_of_w}
Following \cref{lem: choice of w iii}, we set $\bw,\mu$ so that $\Pr_\Acal[\exists t\in[T]:f_{\bw,\mu}(\bx_t)<1]<2\gamma$ holds. For notational brevity, from hereon, we let $f=f_{\bw,\mu}$. 
\end{remark}
We already see by \cref{lem: choice of w i} that $f$ satisfies the regularity assumptions required by \cref{thm: stat hardness}. We therefore complete the proof by showing that it satisfies hardness in terms of getting near stationary points.

\begin{lemma} \label{lem: not stat} For the setup in \cref{def:fwu}, for any  $c= \tfrac{1}{100}$, we have that 
\[\Pr_{\Acal}\left[\exists t\in[T]:\bx_t\text{ is of distance $<1$ to a $c$-stationary point of $f$}\right]\leq 2T\exp(-d/36)
.\] 
\end{lemma}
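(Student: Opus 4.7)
The plan is a short chain of implications combining the three parts of Lemma~\ref{lem: choice of w} with the probability bound from Remark~\ref{rem:choice_of_w}. Concretely, I would argue via the contrapositive: fix $c = \tfrac{1}{100}$ and suppose at some iteration $t\in[T]$ the iterate $\bx_t$ is of distance strictly less than $1$ from some $c$-stationary point $\bx^\star$ of $f$. The goal is to show this forces $f(\bx_t) < 1$, at which point the desired failure probability follows directly from the bound $\Pr_\Acal[\exists t\in[T]: f(\bx_t) < 1] < 2\gamma = 2T\exp(-d/36)$ recorded in Remark~\ref{rem:choice_of_w}.

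The two ingredients needed for the deterministic implication are already in hand. First, by \cref{lem: choice of w ii}, any $c$-stationary point of $f$ is a global minimum of $f$, and in particular $f(\bx^\star) = 0$. Second, by \cref{lem: choice of w i}, $f$ is $1$-Lipschitz, so
\[
f(\bx_t) \;=\; f(\bx_t) - f(\bx^\star) \;\leq\; \|\bx_t - \bx^\star\| \;<\; 1.
\]
This shows that the event ``$\exists t\in[T]$ such that $\bx_t$ is of distance $<1$ from a $c$-stationary point of $f$'' is contained in the event ``$\exists t\in[T]$ such that $f(\bx_t) < 1$.''

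By monotonicity of probability and the bound from Remark~\ref{rem:choice_of_w}, the probability of the former event is at most $2\gamma = 2T\exp(-d/36)$, as claimed. There is no real obstacle here: all the difficulty has been loaded into \cref{lem: choice of w} (especially part (ii), which equates $c$-stationary points with global minima), and the present lemma is just the short bridge from ``function value not small'' to ``not near a stationary point,'' using Lipschitz continuity.
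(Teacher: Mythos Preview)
Your proposal is correct and matches the paper's own proof essentially verbatim: the paper also invokes \cref{lem: choice of w} to get that $c$-stationary points have $f=0$ and that $\min_{t}f(\bx_t)\geq 1$ with probability at least $1-2\gamma$, and then uses $1$-Lipschitzness to conclude the iterates are at distance at least $1$ from any $c$-stationary point. The only cosmetic difference is that the paper phrases the Lipschitz step directly rather than via the contrapositive event containment you wrote.
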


\begin{proof}[Proof of \Cref{lem: not stat}]
By \cref{lem: choice of w}, with probability at least $1-2\gamma:\min_{t\in[T]}f(\bx_t)\geq1$, while for any $c$-stationary point $\bx:f(\bx)=0$. Under this probable event, recalling that $f$ is $1$-Lipschitz, we get that $(\bx_t)_{t=1}^{T}$ must be of distance of at least $1$ from any $c$-stationary point of $f$. Finally, we complete the proof by recalling that $\gamma:=T\exp(-d/36)$ in \cref{def:fwu}.
\end{proof}

We finalize by showing that if an iterate is far away from any approximate-stationary point, then it cannot be an approximate local minimum.

\begin{lemma} \label{lem: stat to min}
    Let $f$ be a Clarke regular function, 
    and suppose $\bx\in\reals^d,c_\delta>0$ are such that $B(\bx,c_\delta)$ does not contain any $c_\epsilon$-stationary point. Then for every $\delta\leq c_\delta:$
    \[ 
    \min_{\bz\in B(\bx,\delta)}f(\bz)
    <f(\bx)-\delta c_{\epsilon}~.
    \]
\end{lemma}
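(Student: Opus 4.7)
The plan is to exploit the Lyapunov property of Clarke regular functions, as foreshadowed in the discussion after \cref{thm: stat hardness}: for such functions, following the subgradient flow decreases the value in proportion to the subgradient norm along the curve. Concretely, I would construct a curve $\gamma:[0,\delta]\to\reals^d$ with $\gamma(0)=\bx$, parametrized by arc length, whose velocity is $\dot\gamma(s)=-\bar\partial f(\gamma(s))/\|\bar\partial f(\gamma(s))\|$, so that $\|\dot\gamma(s)\|=1$ almost everywhere. Such a curve exists for locally Lipschitz functions, and for Clarke regular $f$ the composition $s\mapsto f(\gamma(s))$ is absolutely continuous with the chain-rule identity
\[
\frac{d}{ds}f(\gamma(s)) \;\leq\; \max_{\bg\in\partial f(\gamma(s))}\inner{\bg,\dot\gamma(s)} \;=\; -\|\bar\partial f(\gamma(s))\|
\]
holding almost everywhere (this is the content of the Lyapunov property discussed in \cite{daniilidis2020pathological}).

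Second, since $\gamma$ is arc-length parametrized, $\|\gamma(s)-\bx\|\leq s\leq\delta\leq c_\delta$, so the entire trajectory stays inside $B(\bx,c_\delta)$. By the hypothesis of the lemma, no point in this ball is $c_\epsilon$-stationary, which by the definition in \cref{sec:prelims} means $\|\bar\partial f(\gamma(s))\|>c_\epsilon$ for every $s\in[0,\delta]$ (strict inequality, because $c_\epsilon$-stationarity is $\|\bar\partial f(\cdot)\|\leq c_\epsilon$).

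Third, integrating the differential inequality from $0$ to $\delta$ gives
\[
f(\gamma(\delta))-f(\bx) \;=\; \int_0^\delta \frac{d}{ds}f(\gamma(s))\,ds \;\leq\; -\int_0^\delta \|\bar\partial f(\gamma(s))\|\,ds \;<\; -\delta c_\epsilon.
\]
Since $\gamma(\delta)\in B(\bx,\delta)$, taking $\bz=\gamma(\delta)$ yields $\min_{\bz\in B(\bx,\delta)}f(\bz)\leq f(\gamma(\delta))<f(\bx)-\delta c_\epsilon$, which is the claim.

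The main obstacle is the technical step of justifying the existence of the subgradient curve and the chain-rule inequality in the nonsmooth regime. For Clarke regular Lipschitz functions this is exactly the Lyapunov property, and I would simply invoke \cite{daniilidis2020pathological} (or equivalently set up the flow by the standard construction of \cite{davis2020stochastic}) rather than re-deriving it. Once that tool is in hand, the argument is a one-line integration, and the strict inequality at the end follows automatically from the strict lower bound on $\|\bar\partial f\|$ along the trajectory.
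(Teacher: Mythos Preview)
Your approach is essentially identical to the paper's: it too constructs the unit-speed subgradient flow $\dot\gamma=-\bar\partial f(\gamma)/\|\bar\partial f(\gamma)\|$, observes it stays in $B(\bx,c_\delta)$, applies the chain rule to get $\tfrac{d}{ds}f(\gamma(s))=-\|\bar\partial f(\gamma(s))\|$, and integrates to obtain the strict bound. If anything, you are slightly more careful in stating the chain-rule step as an a.e.\ inequality and explicitly invoking the Lyapunov property via \cite{daniilidis2020pathological}, whereas the paper writes it as a direct equality.
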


\begin{proof}[Proof of \cref{lem: stat to min}]
    Consider the subgradient flow $\bx(0)=\bx,~\frac{d\bx(t)}{dt}= -\frac{\bar{\partial} f(\bx(t))}{\norm{\bar{\partial} f(\bx(t))}}$. Note that the flow is well defined throughout $t\in[0,c_{\delta}]$ since it has unit speed, hence $\bx(t)\in B(\bx,t)\subseteq B(\bx,c_\delta)$ and by assumption $B(\bx,c_\delta)$ does not contain any point with zero gradient. Defining $\phi(t):=f(\bx(t))$, we get by the chain rule
    \[
    \frac{d\phi(t)}{dt}=\bar{\partial} f(\bx(t))\cdot\frac{d\bx(t)}{dt}=-\norm{\bar{\partial} f(\bx(t))},
    \]
    thus
    \[
    \min_{\bz\in B(\bx,\delta)}f(\bz)-f(\bx)\leq
    f(\bx(\delta))-f(\bx)
    =\phi(\delta)-\phi(0)=-\int_{0}^{\delta}\norm{\bar{\partial} f(\bx(t))}dt
    <-\delta c_{\epsilon}~.
    \]
\end{proof}

\begin{proof}[Proof of \cref{thm: stat hardness}]  To prove our main result of this paper, the function we consider is $f = f_{\bw,\mu}$ as defined in \cref{def:fwu}, with $\bw$ and $\mu$ chosen so as to have  \cref{lem: choice of w iii} be satisfied.  We now state where we proved all its claimed properties. 
We showed in \cref{{lem: choice of w i}} that this $f$ is $1$-Lipschitz and Clarke regular. We also showed in \cref{{lem: choice of w i}} that $f$ is non-negative, which implies $\inf f\geq 0$.
Next, from \cref{eq:def_fwu}, we have that $f(\mathbf{0}) \leq h(\mathbf{0}) = \bar{h}(0) \leq 3/2$ 
, where we used \cref{{item:lipsch_conv_hzero}} in the final step. This shows $f(\mathbf{0})-\inf f\leq 2$ as claimed. Similarly, we showed in \cref{{lem: choice of w ii}} that any $c$-stationary point of $f$ is a global minimum for  $c=\tfrac{1}{100}$.  
To show \cref{thm:main result part ii}, we combine  \cref{lem: not stat} with \cref{lem: stat to min} (using $c_\epsilon=c,~c_\delta=1$) to complete the proof. 
\end{proof}

\section{Proof of \Cref{lem: basic hard}}\label{sec:1d_construction}
\noindent The goal of this section is to prove \Cref{lem: basic hard}, which we first recall below. 

\propOneDimFnProperties*

Before proving this result (i.e., constructing the function $\overline{h}$), we  describe our high-level idea, followed by  definitions of the components that constitute $\overline{h}$; our proof appears in \Cref{sec:proof of lem basic hard}. 

\subsection{Intuition for \Cref{lem: basic hard}}  Our function $\overline{h}$ is essentially a translated version of $\rNs$ defined in \cref{eq:construction_of_rN}. As can be seen from this defining expression, $\rNs$ is a \emph{random} piecewise-affine scalar-valued function defined over the entire real line. The key functions that make up $\rNs$ are $\pisi$ (\cref{def:phi_and_Phi}), $\gisi$ (\cref{{def:g_i_zero_and_g_i_one}}), and $\Phi^{(i)}$ (\cref{{def:new_def_Phi}}). 

Crucially, the randomness in the parameter  $\sigma\sim \mathrm{Unif}\{0, 1\}^N$ of $\rNs$ determines the both the  intervals and affine functions that constitute $\rNs$. The intervals and their corresponding functions are carefully chosen so as to ensure continuity of $\rNs$: in particular, within  each affine function is baked in a term of the form $\left( \poso \circ\ptst \circ \hdots\right)^{-1}$ that leads to appropriate cancellation at the endpoints of intervals of definition; see, e.g., \cref{eq:first-lipsc-and-cvx}
and \cref{eq:second-lipsc-and-cvx}.  The Lipschitzness  of $\rNs$ follows  from that of each of its pieces, which in turn follows from the chain rule applied to the composition functions making up each of these pieces and \cref{{bounds_on_slope}}, where $\gisi$ is proven Lipschitz. The same line of reasoning also yields a \emph{lower} bound on the slope of $\rNs$. The proof of convexity of $\rNs$ is made simple by our parametrization of it in terms of $\theta^{(i)}$:  the slope of the constituent affine functions of $\rNs$ varies as $\cot(\theta^{(i)})$. The monotonicity of the cotangent function and our chosen range of $\theta^{(i)}$ then  imply that the slope of $\rNs$ increases as we traverse the real line from left to right, thus immediately giving us the desired convexity. We now proceed to state these technical details with the goal of proving \Cref{lem: basic hard}. The proof is in \Cref{{sec:proof of lem basic hard}}.  

\subsection{Technical Details for \Cref{lem: basic hard}}
\begin{defn}\label{def:thetas}
    For $i\in \mathbb{N}$, we define a sequence of angles $\{\thib\}$ and $\{\this\}$  that satisfies \[ \theta^{(1)}_{\mathrm{base}} = \arctan(1), \quad \theta^{(i)}_{\mathrm{shift}} = \frac{\arctan(8)-\theta^{(1)}_{\mathrm{base}}}{2^i}, \quad \thipb = \thib + \this.\numberthis\label{eq:evolution_of_theta}\] For $\thib$ and $\this$, now define the quantities $\ei$ and $\di$ as follows: 
    \[ \ei = 1  - \frac{3}{2} \cdot \left( \frac{1}{2 \tan(\thib + \this) + \tan(\thib)} \right), \quad \di = \frac{1}{2} \cdot \left(\frac{\tan(\thib + \this) - \tan(\thib)}{2\cdot\tan(\thib + \this) + \tan(\thib)}\right).\numberthis\label{eq:def_eps_parametrized_by_theta}
    \]
\end{defn}
\begin{claim}\label{claim:delta_eps_pos}
The $\di$ and $\ei$ and angles $\thib$ from \cref{def:thetas} satisfy, for all $i \in \mathbb{N}$, that \[ \ei >0, \quad 0 < \di \leq\tfrac{7}{32}, \quad\text{ and }\quad \thib\in [\arctan(1), \arctan(8)].\]
\end{claim}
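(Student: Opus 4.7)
The plan is to first pin down the range of $\thib$ by unrolling the recursion in \cref{eq:evolution_of_theta}, and then substitute the resulting bounds on $\tan(\thib)$ and $\tan(\thipb)$ into the defining formulas for $\ei$ and $\di$ to reduce the remaining assertions to elementary inequalities on two real variables.

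Telescoping $\thipb = \thib + \this$ starting from $\theta^{(1)}_{\mathrm{base}} = \arctan(1)$ yields
\[
\thib \;=\; \arctan(1) + (\arctan(8)-\arctan(1))\sum_{j=1}^{i-1} 2^{-j}.
\]
Since $\arctan(8) > \arctan(1)$ and the geometric sum lies in $[0,1)$, this gives $\thib \in [\arctan(1),\arctan(8))$, which is the third assertion of the claim. A key consequence is that $\thipb = \thib + \this \leq \arctan(8)$ for every $i \in \NN$, so both $\tan(\thib)$ and $\tan(\thipb)$ lie in $[1,8]$, with the latter strictly larger than the former by the positivity of $\this$ and the monotonicity of $\tan$ on $(0,\pi/2)$.

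Writing $a := \tan(\thib)$ and $b := \tan(\thipb)$ with $1 \leq a < b \leq 8$, the formulas in \cref{eq:def_eps_parametrized_by_theta} become
\[
\ei \;=\; 1 - \frac{3}{2(2b+a)}, \qquad \di \;=\; \frac{b-a}{2(2b+a)}.
\]
For $\ei > 0$: since $2b + a \geq 2\cdot 1 + 1 = 3$, the subtracted term is at most $1/2 < 1$. For $\di > 0$: immediate from $b > a$. For the upper bound on $\di$, a quick partial-derivative check shows that $(a,b)\mapsto (b-a)/(2b+a)$ has $\partial_a = -3b/(2b+a)^2 < 0$ and $\partial_b = 3a/(2b+a)^2 > 0$, so it is decreasing in $a$ and increasing in $b$. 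Its maximum over $\{1\leq a\leq b\leq 8\}$ is therefore attained at $(a,b) = (1,8)$, giving $\di \leq \tfrac{1}{2}\cdot\tfrac{7}{17} = \tfrac{7}{34} < \tfrac{7}{32}$.

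There is no real obstacle here: the claim is purely a package of elementary estimates, and the only step requiring a moment of thought is the monotonicity argument that pins down the worst case $(a,b)=(1,8)$ for the upper bound on $\di$. Everything else is direct telescoping and substitution.
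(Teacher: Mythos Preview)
Your proof is correct and follows essentially the same approach as the paper: telescope the recursion to pin down $\thib\in[\arctan(1),\arctan(8))$, then plug the resulting bounds $1\le a<b\le 8$ on the tangents into the explicit formulas for $\ei$ and $\di$. The only minor difference is in the upper bound on $\di$: the paper substitutes $a\ge 1$ and then $b\le 8$ in two steps to reach exactly $\tfrac{7}{32}$, whereas you use a monotonicity argument in both variables to locate the worst case at the corner $(a,b)=(1,8)$, yielding the slightly sharper $\tfrac{7}{34}$; both are valid and the distinction is cosmetic.
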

\begin{proof}First, observe that $\thib$ is monotonically increasing in $i$ (as seen from \cref{eq:evolution_of_theta}). Hence, for all $i\in \mathbb{N}$, we have $\thib \geq \theta^{(1)} = \arctan(1)$, which finishes one part of the claim. To state a lower bound on $\ei$ defined in \cref{eq:def_eps_parametrized_by_theta}, we observe that \[2\tan(\thib + \this) + \tan(\thib) \geq 3\tan(\thib) \geq 3\tan(\theta^{(1)}_{\mathrm{base}}) = 3 \tan(\arctan(1)) = 3 ,\] where the first step used the monotoniticity of the tangent function, the second step used the fact that $\thib$ is monotonically increasing (as seen from \cref{eq:evolution_of_theta}), and the final step is by evaluation. Therefore, we have that \[\ei \geq 1- \frac{3}{2 \times 3} > 0,\] which finishes the proof of the claim of positive $\ei$ for all $i\in \mathbb{N}$. To see the bounds on $\di$, we first note that by monotonicity of the tangent function, $\tan(\thib + \this) > \tan(\thib)$, so $\di >0$. For the upper bound, we note that \[\di = \frac{1}{2} \cdot \left(\frac{\tan(\thib + \this) - \tan(\thib)}{2\cdot\tan(\thib + \this) + \tan(\thib)}\right) \leq \frac{1}{2}\cdot \frac{\tan(\thib + \this)-1}{2\tan(\thib + \this)},\] where the first step is because $\thib \geq \arctan(1)$ for all $i$. Finally, noting that $\thib\leq \arctan(8)$ for all $i$,  we can further bound the term above as $\di \leq \tfrac{1}{4} - \tfrac{1}{32} = \tfrac{7}{32},$ as claimed. 
\end{proof}

\begin{defn}
\label{def:phi_and_Phi} 
For $i\in \mathbb{N}$, we use the $\di$ from \cref{def:thetas} 
 to define the  functions $\piz$ 
and $\pio$ as follows. 
\begin{equation}\label{eq:phi_i_zero_and_one}
\begin{aligned}
\piz:[0,1]\rightarrow\left[\hf-2\di,\hf-\di\right]\subseteq(0,1),\text{ where } \piz(x)=\di\cdot x+\hf-2\di,\\
\pio:[0,1]\rightarrow\left[\hf+\di,\hf+2\di\right]\subseteq(0,1),\text{ where }\pio(x)=\di\cdot x+\hf+\di.
\end{aligned}
\end{equation} These are the unique affine maps with positive derivatives mapping $[0, 1]$ to their respective ranges. Our assertion that $[\hf-2\di, \hf-\di]\subseteq(0,1)$ and $[\hf+\di,\hf+2\di]\subseteq(0,1)$ is justified by $0<\di<\tfrac{1}{4}$ from \cref{claim:delta_eps_pos}. 
We use the functions in \cref{eq:phi_i_zero_and_one} to define the interval 
\[
\intv_{\so\st\dotsc\sk}\defeq\poso\circ\dotsc\circ\pksk(0,1)\subseteq(0,1).
\]
We denote the left and right end point of $\intv_{\so\st\dotsc\sk}$
as 
\[\inf\intv_{\so\st\dotsc\sk}=\poso\circ\ptst\circ\dotsc\circ\pksk(0)
\text{ and } \sup\intv_{\so\st\dotsc\sk}=\poso\circ\ptst\circ\dotsc\circ\pksk(1).\numberthis\label{eq:interval_in_terms_of_phi_chain_at_zero}\]
Furthermore, we note the following definitions of the specific intervals $\intv_{0}$ and
$\intv_{1}:$ 
\[
\intv_{0}=\phi_{0}^{(1)}(0,1),\text{ and }\intv_{1}=\phi_{1}^{(1)}(0,1).
\]
\end{defn}
\begin{lemma}\label{lem:interval_properties}The functions defined in \Cref{def:phi_and_Phi} satisfy the following properties. 
\begin{lemmenum}
\compresslist{ 
\item\label[lemma]{asdf} For any $\ell<k$, and any $\so,\st,\dotsc,\sigma_{\ell},\dotsc,\sk\in\left\{ 0,1\right\} ,$
we have that $\intv_{\so\dotsc\sigma_{\ell}}\supseteq\intv_{\so\dotsc\sigma_{\ell}\dotsc\sk}.$
\item\label[lemma]{chain_phi_increasing} For any $i\geq1$, the function $\poso\circ\dotsc\circ\phi_{\sigma_{i}}^{(i)}:(0,1)\mapsto\intv_{\so\dotsc\sigma_{i}}$
is non-decreasing. 
\item\label[lemma]{unique_chains_nonintersecting_intervals} If $(\so\dotsc\sigma_{k-1})\neq(\sigma_{1}^{\prime}\dotsc\sigma_{k-1}^{\prime})$
then for all $\sk,\sigma_{k}^{\prime}\in\left\{ 0,1\right\} :$ $\intv_{\so\dotsc\sk}\cap\intv_{\sigma_{1}^{\prime}\dotsc\sigma_{k}^{\prime}}=\emptyset.$ 
\item\label[lemma]{dist_inv_N_less_del_implies_x_in_intv_k} Let $k = \tfrac{1}{4}\sqrt{\log\left(\tfrac{1}{\rho}\right)}$.  
Then $\textrm{dist}(x, \intv_{\so\dotsc\sigma_N}) \leq\rho$ implies $x\in \intv_{\so \dotsc \sk}$. 
}
\end{lemmenum}
\end{lemma}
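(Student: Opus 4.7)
The plan is to address the four items in sequence, exploiting that each $\pisi$ is a strictly increasing affine map of $[0,1]$ into $(0,1)$ with slope $\di>0$ (by \cref{claim:delta_eps_pos}). Item \cref{asdf} follows by iterating the observation that $\pisi(I)\subseteq(0,1)$ for any $I\subseteq(0,1)$, so $\phi_{\sigma_{\ell+1}}^{(\ell+1)}\circ\dotsb\circ\pksk((0,1))\subseteq(0,1)$, and then applying the affine map $\poso\circ\dotsb\circ\pisi$ to both sides. Item \cref{chain_phi_increasing} is immediate since each $\pisi$ is strictly increasing and compositions of strictly increasing maps are strictly increasing.

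For \cref{unique_chains_nonintersecting_intervals}, let $j\leq k-1$ be the smallest index with $\sigma_j\neq\sigma_j'$; WLOG $\sigma_j=0$ and $\sigma_j'=1$. By \cref{asdf} it suffices to prove $\intv_{\so\dotsc\sigma_j}\cap\intv_{\sigma_1'\dotsc\sigma_j'}=\emptyset$. The ranges $[\hf-2\delta^{(j)},\hf-\delta^{(j)}]$ and $[\hf+\delta^{(j)},\hf+2\delta^{(j)}]$ of $\phi_0^{(j)}$ and $\phi_1^{(j)}$ are strictly disjoint since $\delta^{(j)}>0$. Because $\sigma_i=\sigma_i'$ for $i<j$, the outer composition $\poso\circ\dotsb\circ\phi_{\sigma_{j-1}}^{(j-1)}$ is identical on both sides and is strictly increasing by \cref{chain_phi_increasing}, hence preserves the disjointness.

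The main obstacle will be \cref{dist_inv_N_less_del_implies_x_in_intv_k}. My plan is to show that $\intv_{\so\dotsc\sigma_N}$ sits at distance at least $\rho$ from $\reals\setminus\intv_{\so\dotsc\sk}$, so that any $x$ with $\dist(x,\intv_{\so\dotsc\sigma_N})\leq\rho$ belongs to $\intv_{\so\dotsc\sk}$. Since $\delta^{(k+1)}\leq\tfrac{7}{32}$ by \cref{claim:delta_eps_pos}, the ranges of $\phi_0^{(k+1)}$ and $\phi_1^{(k+1)}$ both lie in $[\tfrac{1}{16},\tfrac{15}{16}]$, so $\phi_{\sigma_{k+1}}^{(k+1)}\circ\dotsb\circ\pNsN((0,1))$ sits at distance $\geq\tfrac{1}{16}$ from the boundary of $(0,1)$. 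The map $\poso\circ\dotsb\circ\pksk$ is strictly increasing affine with slope exactly $\prod_{i=1}^{k}\delta^{(i)}$, so it scales this distance by that factor, yielding that $\intv_{\so\dotsc\sigma_N}$ lies at distance at least $\tfrac{1}{16}\prod_{i=1}^{k}\delta^{(i)}$ from the boundary of $\intv_{\so\dotsc\sk}$.

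The remaining quantitative task, where the careful choice of angles in \cref{def:thetas} pays off, is to verify $\tfrac{1}{16}\prod_{i=1}^{k}\delta^{(i)}\geq\rho$ when $k=\tfrac{1}{4}\sqrt{\log(1/\rho)}$. On $\thib\in[\arctan 1,\arctan 8]$, the uniform bound $\tan\leq 8$ gives that the denominator in the definition of $\delta^{(i)}$ is at most $24$, while the mean value theorem combined with $\sec^2(\thib)\geq 2$ gives $\tan(\thib+\this)-\tan(\thib)\geq 2\this$. Hence $\delta^{(i)}\geq\tfrac{\this}{24}=\Omega(2^{-i})$, so $\sum_{i=1}^{k}\log(1/\delta^{(i)})=O(k^2)$; for the stated choice of $k$ this is at most $\log(1/\rho)-O(1)$, which delivers the required inequality and completes the argument.
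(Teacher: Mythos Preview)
Your proof is correct and follows essentially the same approach as the paper. The only cosmetic differences are that for \cref{unique_chains_nonintersecting_intervals} you invoke \cref{asdf} to reduce to the first differing index (the paper writes out the chain of inequalities directly), and for \cref{dist_inv_N_less_del_implies_x_in_intv_k} you compute the gap $\tfrac{1}{16}\prod_{i=1}^{k}\delta^{(i)}$ in one step and lower-bound $\delta^{(i)}$ via the mean value theorem, whereas the paper telescopes (keeping only the first term, which is exactly your expression) and bounds $\delta^{(i)}$ via the tangent addition formula; the resulting estimates and the verification that $k=\tfrac14\sqrt{\log(1/\rho)}$ suffices are the same.
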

\begin{proof}[Proof of \Cref{lem:interval_properties}] We prove each of the parts below.
\paragraph{Proof of \Cref{asdf}.} The claim follows by the following observation: \[ \intv_{\so\dotsc\sigma_{\ell}\dotsc\sk} = \poso\circ\dotsc\circ\phi^{(\ell)}_{\sigma_{\ell}}(\phi^{(\ell+1)}_{\sigma_{\ell+1}}\circ\dotsc\circ\pksk(0,1))\subseteq \poso\circ\dotsc\circ\phi^{(\ell)}_{\sigma_{\ell}}(0,1)=\intv_{\so\dotsc\sigma_{\ell}}.\] 

\paragraph{Proof of \Cref{chain_phi_increasing}.} Observe that from \Cref{def:phi_and_Phi}, the function $\poso\circ\ptst\circ\dotsc\pisi$ is a composition of affine functions and is therefore itself affine; furthermore, by applying the chain rule, one may infer that the derivative of a composition of affine functions equals the product of the derivatives of the individual composing functions,
which in our case is the product $\delta^{(1)}\cdot\delta^{(2)}\dotsc \delta^{(i)}$, which is positive since each $\delta^{(i)}$ is as well.  

\paragraph{Proof of \Cref{unique_chains_nonintersecting_intervals}.} Suppose
$i\leq k-1$ be the minimal index for which $\sigma_{i}\ne\sigma_{i}^{\prime}$
and assume, without loss of generality, that $\sigma_{i}=0$ and $\sigma_{i}^{\prime}=1.$
Consider a point $x$ satisfying $x\in\intv_{\si\dotsc\sk}.$ In other words,
for some $u\in(0,1),$ we have that $x = \poso \circ \dotsc \circ \pksk(0)$. Then, we have: 
\begin{align*}
\poso\circ\dotsc\circ\pksk(u)
 & \leq\sup_{u\in(0,1)}\poso\circ\dotsc\circ\phi_{\sigma_{i-1}}^{(i-1)}\circ\phi_{0}^{(i)}\circ\phi_{\sigma_{i+1}}^{(i+1)}\dotsc\circ\pksk(u)\\
 & \leq\sup_{u\in(0,1)}\poso\circ\dotsc\circ\phi_{\sigma_{i-1}}^{(i-1)}\circ\phi_{0}^{(i)}(u)\\
 & <\poso\circ\dotsc\circ\phi_{\sigma_{i-1}}^{(i-1)}\left(\hf\right)\\
 & \leq\inf_{u\in(0,1)}\poso\circ\dotsc\circ\phi_{\sigma_{i-1}}^{(i-1)}\circ\phi_{1}^{(i)}(u)\\
 & =\inf_{u\in(0,1)}\poso\circ\dotsc\circ\phi_{\sigma_{i-1}}^{(i-1)}\circ\phi_{\sigma_{i}^{\prime}}^{(i)}(u)\\
 & \leq\inf_{u\in(0,1)}\phi^{(1)}_{\sigma^\prime_1}\circ\dotsc\circ\phi_{\sigma^\prime_{i-1}}^{(i-1)}\circ\phi_{\sigma_{i}^{\prime}}^{(i)}\circ\phi_{\sigma_{i+1}^{\prime}}^{(i+1)}\circ\dotsc\phi_{\sigma^\prime_{k}}^{(k)}(u)
 .\numberthis\label[ineq]{ineq:x_le_inf_intv}
\end{align*}
 where the first step is by taking the largest value over all feasible
$u;$ the second step is because $\phi_{\sigma_{i+1}}^{(i+1)}\dotsc\circ\pksk(u)\subseteq(0,1)$
implies we are simply maximimizing over a larger set of arguments
of $\phi_{0}^{(i)}$; the third step uses that by \cref{claim:delta_eps_pos},
we have $\phi_{0}^{(i)}(0)=\hf-2\di\in (0, \hf)$ and further that $\poso\circ\dotsc\circ\phi_{\sigma_{i-1}}^{(i-1)}$
is non-decreasing; the fourth step again uses monotonicity of $\poso\circ\dotsc\circ\phi_{\sigma_{i-1}}^{(i-1)}$
and the fact that $\hf<\phi_{1}^{(i)}(0)=\hf+\di$; the sixth step
uses the fact that we are minimizing over a smaller set as well as replaces, in the first $i-1$ terms of the composition, $\phi^{(j)}_{\sigma_j}$ by $\phi^{(j)}_{\sigma^\prime_j}$. In conclusion, from \cref{ineq:x_le_inf_intv}, 
we have that $x\notin \inf_{u\in(0,1)}\poso\circ\dotsc\circ\phi_{\sigma^\prime_{k}}^{(k)}(u)$ and hence $x\notin \intv_{\sigma^\prime_1\dotsc\sigma^\prime_{k}},$ which finishes the proof of  $\intv_{\so\dotsc\sk}\cap\intv_{\sigma_{1}^{\prime}\dotsc\sigma_{k}^{\prime}}=\emptyset$. 

\paragraph{Proof of \Cref{dist_inv_N_less_del_implies_x_in_intv_k}.}  Recall from \Cref{asdf} that $\intv_{\so\hdots\sigma_N}\subseteq \intv_{\so\hdots\sk}$, and note that for any $i \in \mathbb{N}$: 
\begin{align*} 
\inf \intv_{\so\dotsc\sigma_{i+1}} - \inf \intv_{\so\dotsc\sigma_i} = \poso\circ\dotsc\pisi\circ\phi^{(i)}_{\sigma_{i+1}}(0) - \poso\circ\dotsc\circ\pisi(0) 
            = \prod_{j=1}^i (\phi^{(j)}_{\sigma_j})^\prime \cdot (\phi^{(i+1)}_{\sigma_{i+1}}(0) - 0).
\end{align*} Next, recall from \Cref{eq:phi_i_zero_and_one} that  for every $j\in \mathbb{N}$, the slope $(\phi^{(j)}_{\sigma_j})^\prime = \delta^{(j)}$. We get a lower bound on the product $\prod_{j=1}^i \delta^{(j)}$ as follows. 
\[ \di =\frac{1}{2}\cdot \frac{\tan(\thib+\this)-\tan(\thib)}{2\cdot\tan(\thib+\this)+\tan(\thib)}\geq\frac{\tan(\this)}{12} = \frac{1}{12}\cdot\tan\left(\frac{\arctan(8)-\arctan(1)}{2^i}\right),\numberthis\label[ineq]{eq:delta_simplification}\] where 
the first inequality is due to the identity $\tan(\thib +\this) = \frac{\tan(\thib) + \tan(\this)}{1-\tan(\thib)\tan(\this)}$ and simplifying via the facts that $\tan(\thib) \geq 0$, $\tan(\this)\geq 0$, all angles $\thib\in[\arctan(1),\arctan(8)]$ and that since
the tan function is monotonically increasing in $(0,\pi/2)$, we have
$\tan(\thib+\this)\geq\tan(\thib)$.
We can therefore continue the lower bound on $\prod_{j=1}^i (\phi^{(j)}_{\sigma_j})^\prime \cdot (\phi^{(i+1)}_{\sigma_{i+1}}(0) - 0)$ as follows. 
\begin{align*}
\prod_{j=1}^i (\phi^{(j)}_{\sigma_j})^\prime \cdot (\phi^{(i)}_{\sigma_{i+1}}(0) - 0)
&\geq \frac{1}{16}\cdot\prod_{j=1}^{i}\tan\left(\frac{\arctan(8)-\arctan(1)}{2^{j}}\right)\\ 
&\geq \frac{1}{16}\cdot\prod_{j=1}^i \left( \frac{\arctan(8)-\arctan(1)}{2^{j}} \right)\\ 
&\geq (\arctan(8)-\arctan(1))^{i}\cdot \frac{1}{2^{2i^2+4}}, \numberthis\label[ineq]{delta_lower_bound_arctan_diff}
\end{align*} where the first step is by \Cref{eq:delta_simplification} and lower bounding $\phi^{(i+1)}_{\sigma_{i+1}}(0) = \tfrac{1}{2}-2\di\geq \tfrac{1}{2}- 2\cdot\tfrac{7}{32}\geq \tfrac{1}{16}$ from \Cref{def:phi_and_Phi} and \Cref{claim:delta_eps_pos},
and the second step is by lower bounding each term $\tan(x)$ of the product via $\tan(x)\geq x$, which in turn follows from the power series expansion of the tangent function.
Therefore, we may use \cref{delta_lower_bound_arctan_diff} as follows:
\begin{align*}
    \inf \intv_{\so\dotsc\sigma_N}-\inf \intv_{\so\dotsc\sk}&= \sum_{i = k}^{N-1}(\inf \intv_{\so\dotsc\sigma_{i+1}} - \inf \intv_{\so\dotsc\sigma_i}) \\ 
    &\geq\sum_{i=k}^N \frac{(\arctan(8)-\arctan(1))^{i}}{2^{2i^2+4}} \\ 
    &\geq\frac{(\arctan(8)-\arctan(1))^{k}}{2^{2k^2+4}}\\ 
&\geq \frac{1}{2^{2k^2+k+4}} > \rho, 
\end{align*}
where in the second inequality, we dropped all but the first term (valid since all the terms are non-negative), in the penultimate inequality, we plugged in a lower bound for $\arctan(8)-\arctan(1)$, and the final inequality holds for the choice of $k= \tfrac{1}{4}\sqrt{\log\left(\tfrac{1}{\rho}\right)}$. We  conclude $\inf\intv_{\so\dotsc\sk}<\inf\intv_{\so\dotsc\sigma_N}-\rho$ and analogously $\sup\intv_{\so\dotsc\sk}>\sup\intv_{\so\dotsc\sigma_N}+\rho$. Together, the two bounds imply the claim. 
\end{proof}
\begin{defn}
\label{def:g_i_zero_and_g_i_one}Using $\ei$ and $\di$ from \cref{def:thetas} and $\pio$ and $\piz$ from \cref{def:phi_and_Phi}, we now define, for $i\in \mathbb{N}$, the following family
of functions:
\[
\gio:[0,1]\backslash\pio(0,1)\rightarrow\reals, \text{ where }
\gio(x)=\begin{cases}
-\frac{1-\ei}{\hf+\di}\cdot x+1 & \text{if }x\in[0,\hf+\di]\\
\frac{1-\ei}{\hf-2\di}\cdot x+\frac{-\hf-2\di+\ei}{\hf-2\di} & \text{if }x\in[\hf+2\di,1],
\end{cases}
\]
and $g_{0}^{(i)}(x)=g_{1}^{(i)}(1-x)$ with the following explicit closed-form expression:  
\begin{align*}
\giz:[0,1]\backslash\piz(0,1)\rightarrow\reals, \text{ where }  
\giz(x)&=\begin{cases}
-\frac{1-\ei}{\hf-2\di}\cdot x+1 & \text{if }x\in[0,\hf-2\di]\\
\frac{1-\ei}{\hf+\di}\cdot x+\frac{-\hf+\di+\ei}{\hf+\di} & \text{if }x\in[\hf-\di,1].
\end{cases}
\end{align*}
\end{defn}

\begin{lemma}\label{lem:g_properties}
The functions $\gisi$ from \cref{def:g_i_zero_and_g_i_one} satisfy the following properties. 
\begin{lemmenum}
\compresslist{
\item\label[lemma]{eq:g_one_g_zero_eval_one_zero} For both possible choices of $\sigma_i$,  the end points of the functions $\gisi$ satisfy  \[g_{\sigma_i}^{(i)}(1)=g_{\sigma_i}^{(i)}(0)=1.\] 
\item\label[lemma]{eq:g_one_g_one_mid_points}  For both possible choices of $\sigma_i$,  the functions $\gisi$ and $\pisi$ (from \cref{def:phi_and_Phi}) satisfy \[\gisi\circ \pisi(0) = \ei.\] 
\item\label[lemma]{all_slopes_of_gisi} Recall $\thib$ and $\thipb$ as defined in \cref{def:thetas}. The derivatives of $\giz$ and $\gio$ are given by \[ (\giz)^\prime(x) = \begin{cases}
  -\cot(\thib)  & \text{ if } x \in (0, \hf - 2\di)\\ 
  \cot(\thipb) & \text{ if } x\in (\hf - \di, 1)
\end{cases} \] 
and \[ (\gio)^\prime(x)  = \begin{cases}
    -\cot(\thipb) & \text{ if } x\in (0, \hf + \di)\\ 
    \cot(\thib) & \text{ if } x\in (\hf + 2\di, 1). 
\end{cases}  \]
\item\label[lemma]{bounds_on_slope} The derivative, in absolute value, of $\gisi$ for all $i \in \mathbb{N}$ is at least $\tfrac{1}{8}$ and at most $1$ (in the interior of its domain). In other words, we have: \[ \tfrac{1}{8}\leq  |(\giz)^\prime(x)| \leq 1\text{ for } x\in (0, \hf - 2\di)\cup(\hf-\di,1) \]and \[ \tfrac{1}{8}\leq  |(\gio)^\prime(x)| \leq 1\text{ for } x\in (0, \hf + \di)\cup(\hf+2\di,1). \]
}
\end{lemmenum}
\end{lemma}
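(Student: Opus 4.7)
The plan is to verify each of the four parts by direct substitution and algebraic manipulation, leveraging the explicit affine forms of $\gisi$ in \cref{def:g_i_zero_and_g_i_one} together with the parametric identities for $\ei$ and $\di$ in \cref{eq:def_eps_parametrized_by_theta}. Since $\giz(x) = \gio(1-x)$ by construction, I will focus on $\gio$ throughout and obtain the $\giz$ statements by reflection.

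For \cref{eq:g_one_g_zero_eval_one_zero}, I would plug $x=0$ into the first branch of $\gio$ to obtain $\gio(0)=1$, and plug $x=1$ into the second branch to get $\frac{1-\ei}{\hf-2\di}+\frac{-\hf-2\di+\ei}{\hf-2\di}=\frac{\hf-2\di}{\hf-2\di}=1$; the analogous identities for $\giz$ then follow from $\giz(x)=\gio(1-x)$. For \cref{eq:g_one_g_one_mid_points}, I would observe from \cref{eq:phi_i_zero_and_one} that $\pio(0)=\hf+\di$ lies in the first branch $[0,\hf+\di]$ of $\gio$, and direct substitution yields $\gio(\pio(0))=-(1-\ei)+1=\ei$, with the $\giz$ case handled symmetrically.

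The main computation is \cref{all_slopes_of_gisi}. Since each branch of $\gisi$ is affine, the derivative is simply the slope constant, so the task reduces to showing the two scalar identities
\[
\frac{1-\ei}{\hf+\di}=\cot(\thipb)\quad\text{and}\quad \frac{1-\ei}{\hf-2\di}=\cot(\thib).
\]
Substituting $1-\ei=\tfrac{3}{2}\cdot\tfrac{1}{2\tan(\thipb)+\tan(\thib)}$ and reducing $\hf+\di$ and $\hf-2\di$ over the common denominator $2(2\tan(\thipb)+\tan(\thib))$, I expect them to simplify cleanly to $\tfrac{3\tan(\thipb)}{2(2\tan(\thipb)+\tan(\thib))}$ and $\tfrac{3\tan(\thib)}{2(2\tan(\thipb)+\tan(\thib))}$, respectively. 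Dividing then immediately gives the claimed cotangents. The $\giz$ formulas again follow from the reflection identity (with an extra minus sign from the chain rule). I anticipate this algebraic simplification to be the main (though still routine) step, as one needs to be careful to match each branch with the correct angle and sign of the slope.

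Finally, for \cref{bounds_on_slope}, I would combine part \cref{all_slopes_of_gisi} with the bound $\thib\in[\arctan(1),\arctan(8)]$ established in \cref{claim:delta_eps_pos}. Since $\cot$ is monotonically decreasing on $(0,\pi/2)$, this range transfers to $\cot(\thib)\in[\tfrac{1}{8},1]$, and the same bound holds for $\cot(\thipb)$ because $\thipb$ lies in the same interval. Taking absolute values on each of the four slope formulas from \cref{all_slopes_of_gisi} then yields the claimed uniform bounds $\tfrac{1}{8}\leq |(\gisi)'(x)|\leq 1$ on the interior of the domain, completing the proof.
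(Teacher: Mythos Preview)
Your proposal is correct and follows essentially the same approach as the paper: direct evaluation for parts \cref{eq:g_one_g_zero_eval_one_zero} and \cref{eq:g_one_g_one_mid_points}, the same algebraic simplification of $\frac{1-\ei}{\hf\pm\cdots}$ via the parametric formulas \cref{eq:def_eps_parametrized_by_theta} for part \cref{all_slopes_of_gisi}, and the angle range $[\arctan(1),\arctan(8)]$ plus monotonicity of $\cot$ for part \cref{bounds_on_slope}. The only cosmetic difference is that you systematically exploit the reflection $\giz(x)=\gio(1-x)$ to handle both choices of $\sigma_i$ at once, whereas the paper computes the $\giz$ slopes separately; this is a minor expository streamlining rather than a different argument.
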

\begin{proof}
    To check \cref{eq:g_one_g_zero_eval_one_zero} and \cref{eq:g_one_g_one_mid_points}, one may evaluate the functions in question from \cref{def:g_i_zero_and_g_i_one}. To prove \cref{all_slopes_of_gisi}, we observe from \cref{def:g_i_zero_and_g_i_one} that for $x\in (0, \hf - 2\di)$, the derivative of $\giz$ is given by the expression:  \[ (\giz)^\prime(x) = - \frac{1-\ei}{\hf-2\di} = - \frac{\frac{3}{2} \cdot \left( \frac{1}{2 \tan(\thib + \this) + \tan(\thib)} \right)}{\frac{1}{2}- 2\cdot\frac{1}{2} \cdot \left(\frac{\tan(\thib + \this) - \tan(\thib)}{2\cdot\tan(\thib + \this) + \tan(\thib)}\right)}  =  \frac{-3}{3\tan(\thib)} = -\cot(\thib),\]  and  for $x\in (\hf - \di, 1)$, the derivative of $\giz$ is: \[(\giz)^\prime(x) =  \frac{1-\ei}{\hf + \di} = \frac{\frac{3}{2} \cdot \left( \frac{1}{2 \tan(\thib + \this) + \tan(\thib)} \right)}{\frac{1}{2}+ \frac{1}{2} \cdot \left(\frac{\tan(\thib + \this) - \tan(\thib)}{2\cdot\tan(\thib + \this) + \tan(\thib)}\right)} = \frac{3}{3 \tan(\thib + \this)} = \cot(\thipb).\] The derivatives of $\gio$ are computed in a similar fashion. To prove \cref{bounds_on_slope}, we note that the largest angle $\thib$ obtained in the sequence described by \cref{eq:evolution_of_theta} is  \[\theta^{(\infty)}_{\mathrm{base}} = \theta^{(1)}_{\mathrm{base}} + \sum_{i = 1}^{\infty} \frac{\arctan(8)-\theta^{(1)}_{\mathrm{base}}}{2^i} \leq \theta^{(1)}_{\mathrm{base}}+ (\arctan(8)-\theta^{(1)}_{\mathrm{base}}) = \arctan(8).\numberthis\label{eq:largest_base_angle}\] Since the cotangent function is decreasing on the positive interval upto $\pi/2$, plugging into \cref{all_slopes_of_gisi} the upper bound \cref{eq:largest_base_angle} implies that the lower bound on the (absolute) value of the  derivative of $\gisi$ in the interior of its domain is \[ |(\gisi)^\prime(x)|\geq  \cot(\arctan(8)) = \tfrac{1}{8}.\] For the upper bound, we observe that the largest (in absolute value) derivative is attained at the smallest angle: \[ |(\gisi)^\prime(x)|\leq \cot(\tan(\theta^{(1)}_{\mathrm{base}})) = \cot(\tan(1)) = 1.\]  
\end{proof}

\begin{defn}\label{def:new_def_Phi}
    Using the above functions, we define 
\[\Phi^{(i)}(x)=\di\cdot x+\gisi\circ\pisi(0)-\di.\]
This definition yields the following important consequence: 
\begin{equation}
\Phi^{(i)}(1)= \giz\circ\piz(0) = \gio\circ\pio(0),\label{eq:Phi_one_g_circ_phi_zero_new}
\end{equation} where the second equality is justified in \cref{eq:g_one_g_one_mid_points}. 
\end{defn}

\noindent Finally we are ready to provide the following function definition. 
\begin{defn}
\label{def:Lipsc_cvx_fn_construction} 
Given $N\in \mathbb{N}$ and $\sigma\in\left\{0, 1\right\}^N$, define the function 
\[
\rNs(x)=\begin{cases}
\goso(x) & x\in\left[0,\quad\intv_{\sigma_{1}}[\ell]\right]\\
\Po\circ\gtst\circ\left(\poso\right)^{-1}(x) & x\in\left[\intv_{\sigma_{1}}[\ell],\quad\intv_{\sigma_{1}\st}[\ell]\right]\\
\Po\circ\Pt\circ\gthsth\circ\left(\poso\circ\ptst\right)^{-1}(x) & x\in\left[\intv_{\sigma_{1}\st}[\ell],\quad\intv_{\so\st\sigma_{3}}[\ell]\right],\\
\vdots & \vdots\\
\Po\circ\dotsc\circ\Phi^{(i)}\circ g_{\sigma_{i+1}}^{(i+1)}\circ\left(\poso\circ\dotsc\circ\pisi\right)^{-1}(x) & x\in\left[\intv_{\so\dotsc\si}[\ell],\quad\intv_{\so\dotsc\sigma_{i+1}}[\ell]\right]\\
\vdots & \vdots\\
\cot(\thiNpb)\cdot\left(\poso\circ\dotsc\circ\pNsN(0)-x\right)+\Po\circ\dotsc\circ\PN(1) & x\in\left[\intv_{\so\dotsc\sigma_{N}}[\ell],\quad x_{\mathrm{mid}}\right]\\
\cot(\thiNpb)\cdot\left(x-\poso\circ\dotsc\circ\pNsN(1)\right)+\Po\circ\dotsc\circ\PN(1) & x\in\left[x_{\mathrm{mid}},\quad\intv_{\so\dotsc\sigma_{N}}[r]\right]\\
\vdots\\
\Po\circ\dotsc\circ\Phi^{(i)}\circ g_{\sigma_{i+1}}^{(i+1)}\circ\left(\poso\circ\dotsc\circ\pisi\right)^{-1}(x) & x\in\left[\intv_{\so\dotsc\sigma_{i+1}}[r],\quad\intv_{\so\dotsc\si}[r]\right]\\
\vdots\\
\Po\circ\Pt\circ\gthsth\circ\left(\poso\circ\ptst\right)^{-1}(x) & x\in\left[\intv_{\sigma_{1}\st\sigma_{3}}[r],\quad\intv_{\so\st}[r]\right],\\
\Po\circ\gtst\circ\left(\poso\right)^{-1}(x) & x\in\left[\intv_{\sigma_{1}\st}[r],\quad\intv_{\so}[r]\right]\\
\goso(x) & x\in\left[\intv_{\so}[r],\quad1\right]\\
1-x & x<0\\
x & x>1,
\end{cases} \numberthis\label{eq:construction_of_rN}
\]
where $x_{\mathrm{mid}}= \hf\cdot\left(\poso\circ\ptst\circ\dotsc\circ\pNsN(0)+\poso\circ\ptst\circ\dotsc\circ\pNsN(1)\right)$.
\end{defn}
\noindent 
The following lemma follows immediately by combining \Cref{unique_chains_nonintersecting_intervals} with
the construction of $\rNs$, and noting that there

\begin{lemma}\label{lem:r_cantGuess}
For any $k<N\in \mathbb{N},~\sigma\in\left\{0, 1\right\}^N$, any local oracle $\OO$ and any $x\notin \intv_{\so\dotsc\sigma_k}$, it holds that $\OO_{\rNs}(x)$ does not depend on $\sigma_{k+1},\dots,\sigma_N$.
Therefore, for any $t\in\NN,~1\leq k<l< N:$
\[
    \Pr_{\sigma\sim\mathrm{Unif}\{0,1\}^N}[x_{t+1}\in \intv_{\so\dotsc\sigma_k\dotsc\sigma_l}\mid x_1,\dots,x_t\notin \intv_{\so\dotsc\sigma_k}]\leq \frac{1}{2^{l-k-1}}~.\]
\end{lemma}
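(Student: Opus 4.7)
My plan is to prove the two claims in sequence. For the structural claim, I would proceed by a careful inspection of the piecewise definition of $\rNs$ in \Cref{def:Lipsc_cvx_fn_construction}. The key observation is that the piece of $\rNs$ living between the endpoints of $\intv_{\so\dotsc\sigma_j}$ and $\intv_{\so\dotsc\sigma_{j+1}}$ has the analytic form $\Phi^{(1)}\circ\dotsc\circ\Phi^{(j)}\circ g^{(j+1)}_{\sigma_{j+1}}\circ(\poso\circ\dotsc\circ\phi^{(j)}_{\sigma_j})^{-1}$, and therefore depends only on the indices $\sigma_1,\dotsc,\sigma_{j+1}$. For any $x \notin \intv_{\so\dotsc\sigma_k}$, a sufficiently small neighborhood of $x$ is covered entirely by such pieces with $j \leq k$ --- where the case $j = k$ arises only when $x$ lies at a boundary point of $\intv_{\so\dotsc\sigma_k}$ --- and hence $\rNs$ restricted to this neighborhood depends at most on $\sigma_1,\dotsc,\sigma_{k+1}$ and never on $\sigma_{k+2},\dotsc,\sigma_N$. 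Locality of the oracle then implies that $\OO_{\rNs}(x)$ is a function of $(\sigma_1,\dotsc,\sigma_{k+1})$ alone, which yields the first claim.

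For the probability bound, I would combine this structural observation with the disjointness guaranteed by \Cref{unique_chains_nonintersecting_intervals}. Letting $E := \{x_1,\dotsc,x_t \notin \intv_{\so\dotsc\sigma_k}\}$, iteratively applying the structural claim ensures that all oracle responses $\OO_{\rNs}(x_1),\dotsc,\OO_{\rNs}(x_t)$ are deterministic functions of $(\sigma_1,\dotsc,\sigma_{k+1})$ and the algorithm's internal randomness; consequently, so is $x_{t+1}$. In particular, conditional on $(\sigma_1,\dotsc,\sigma_{k+1})$ and $E$, the iterate $x_{t+1}$ is independent of the uniformly distributed bits $(\sigma_{k+2},\dotsc,\sigma_\ell)$. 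Averaging over these bits gives
\[
\Pr[x_{t+1} \in \intv_{\so\dotsc\sigma_\ell} \mid E,\, \sigma_1,\dotsc,\sigma_{k+1}] \;=\; \frac{1}{2^{\ell-k-1}}\sum_{s \in \{0,1\}^{\ell-k-1}} \Pr[x_{t+1} \in \intv_{\so\dotsc\sigma_{k+1}s} \mid E,\, \sigma_1,\dotsc,\sigma_{k+1}],
\]
and \Cref{unique_chains_nonintersecting_intervals} ensures that the intervals $\{\intv_{\so\dotsc\sigma_{k+1}s}\}_s$ are pairwise disjoint, so the inner sum is bounded by $1$. Taking a further expectation over $(\sigma_1,\dotsc,\sigma_{k+1})$ preserves the bound $1/2^{\ell-k-1}$ and concludes the proof.

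The principal technical obstacle is handling boundary points correctly in the structural claim: at a boundary point such as $\intv_{\so\dotsc\sigma_k}[\ell]$, every open neighborhood contains nearby interior points of $\intv_{\so\dotsc\sigma_k}$ on which $\rNs$ genuinely depends on $\sigma_{k+1}$. This is precisely why one cannot eliminate the dependence on $\sigma_{k+1}$ in full generality, and is what forces Part 2 to carry the bound $1/2^{\ell-k-1}$ rather than the potentially tighter $1/2^{\ell-k}$ one would obtain under strict interior conditioning. Once this bookkeeping is in place, the remainder is a direct application of the law of total probability combined with the interval disjointness.
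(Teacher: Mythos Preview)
Your proposal is correct and follows the same route the paper indicates: the paper states only that the lemma ``follows immediately by combining \Cref{unique_chains_nonintersecting_intervals} with the construction of $\rNs$,'' without giving any further details, and your argument is precisely a fleshed-out version of that sketch.

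It is worth noting that your analysis is in fact more careful than the paper's own statement. You correctly observe that at a boundary point $x=\intv_{\so\dotsc\sigma_k}[\ell]$ (which lies outside the \emph{open} interval $\intv_{\so\dotsc\sigma_k}$), any neighborhood of $x$ meets the piece on which $\rNs$ equals $\Po\circ\dotsc\circ\Phi^{(k)}\circ g^{(k+1)}_{\sigma_{k+1}}\circ(\poso\circ\dotsc\circ\phi^{(k)}_{\sigma_k})^{-1}$, and hence the local oracle at $x$ may see $\sigma_{k+1}$. So the first sentence of the lemma, read literally, is slightly imprecise at these boundary points. Your workaround --- proving dependence on at most $(\sigma_1,\dotsc,\sigma_{k+1})$ and then conditioning on these $k+1$ bits before applying disjointness over the remaining $l-k-1$ bits --- is exactly the right fix, and it explains why the bound in the conclusion is $2^{-(l-k-1)}$ rather than $2^{-(l-k)}$.
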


We now prove some properties of $\rNs$ and construct the function $\bar{h}$ referred to in   \Cref{lem: basic hard}; our construction of $\bar{h}$ ensures that it inherits all its necessary properties from $\rNs$. 
\subsection{Putting it all together: proof of \cref{lem: basic hard}}\label{sec:proof of lem basic hard}
\begin{proof}[Proof of \cref{lem: basic hard}] We first show 
the properties of continuity, $1$-Lipschitzness, and convexity in $\rNs$, and its upper bound at zero (i.e., \Cref{lem: basic hard i}).     
\paragraph{Proof of continuity.}
We start by proving the continuity of $\rNs.$ Since $g_{\sigma_{i}}^{(i)}$,
$\phi_{\sigma_{i}}^{(i)},$and $\Phi^{(i)}$ are all (piecewise) affine
over their domains of definition, their composition and hence, from
\cref{def:Lipsc_cvx_fn_construction}, $\rNs$ is also piecewise affine.
To show continuity, we therefore need to show this only at the endpoints
of each of the segments in \cref{def:Lipsc_cvx_fn_construction}. For
some $1<i<N,$ consider the left endpoint of the segment \[\left[\intv_{\so\st\dotsc\si}[\ell],\quad\intv_{\so\st\dotsc\sigma_{i+1}}[\ell]\right].\] 
For continuity at this interval's left endpoint $\intv_{\so\st\dotsc\si}[\ell] = \poso\circ\ptst\circ\dotsc\circ\pisi(0)$ (\cref{eq:interval_in_terms_of_phi_chain_at_zero}),  we need to show that at $x=\poso\circ\ptst\circ\dotsc\circ\pisi(0),$
 the functions $\Po\circ\Pt\circ\dotsc\circ\Phi^{(i-1)}\circ g_{\sigma_{i}}^{(i)}\circ\left(\poso\circ\ptst\circ\dotsc\circ\phi_{\sigma_{i-1}}^{(i-1)}\right)^{-1}(x)$
and $\Po\circ\Pt\circ\dotsc\circ\Phi^{(i)}\circ g_{\sigma_{i+1}}^{(i+1)}\circ\left(\poso\circ\ptst\circ\dotsc\circ\pisi\right)^{-1}(x)$
have the same value. To prove this, we simply evaluate the two functions
one by one. First, observe that 
\begin{align*}
 & \Po\circ\Pt\circ\dotsc\circ\Phi^{(i)}\circ g_{\sigma_{i+1}}^{(i+1)}\circ\left(\poso\circ\ptst\circ\dotsc\circ\pisi\right)^{-1}\left(\poso\circ\ptst\circ\dotsc\circ\pisi(0)\right)\\
 & =\Po\circ\Pt\circ\dotsc\circ\Phi^{(i)}\circ g_{\sigma_{i+1}}^{(i+1)}(0)=\Po\circ\Pt\circ\dotsc\circ\Phi^{(i)}(1),\numberthis\label{eq:first-lipsc-and-cvx}
\end{align*}
where we use \cref{eq:g_one_g_zero_eval_one_zero} in the final step.
Next, observe that 
\begin{align*}
 & \Po\circ\Pt\circ\dotsc\circ\Phi^{(i-1)}\circ g_{\sigma_{i}}^{(i)}\circ\left(\poso\circ\ptst\circ\dotsc\circ\phi_{\sigma_{i-1}}^{(i-1)}\right)^{-1}\left(\poso\circ\ptst\circ\dotsc\circ\pisi(0)\right)\\
 & =\Po\circ\Pt\circ\dotsc\circ\Phi^{(i-1)}\circ g_{\sigma_{i}}^{(i)}\circ\phi_{\sigma_{i}}^{(i)}(0)
 .\numberthis\label{eq:second-lipsc-and-cvx}
\end{align*}
From \cref{eq:Phi_one_g_circ_phi_zero_new}, the final terms in \cref{eq:first-lipsc-and-cvx}
and \cref{eq:second-lipsc-and-cvx} may be concluded to be equal. Next,
consider $i=1.$ We need to consider the left endpoint of the
interval \[ \left[\intv_{\sigma_{1}}[\ell],\quad\intv_{\sigma_{1}\st}[\ell]\right]. \]
To check continuity at the left endpoint of this interval, we evaluate two functions $\goso(x)$ and $\Po\circ\gtst\circ\left(\poso\right)^{-1}(x)$
at $\intv_{\sigma_{1}}[\ell] = \poso(0).$ We have: 
\begin{equation}
\goso(x)=g_{\sigma_{1}}^{(1)}\circ\poso(0)
.\label{eq:third-lipsc-and-cvx}
\end{equation}
Next, we
have 
\begin{equation}
\Po\circ\gtst\circ\left(\poso\right)^{-1}(x)=\Po\circ\gtst\circ\left(\poso\right)^{-1}(\poso(0))=\Po\circ g_{\sigma_{2}}^{(2)}(0)=\Po(1)
.\label{eq:fourth-lipsc-and-cvx}
\end{equation}
Comparing \cref{eq:third-lipsc-and-cvx}
and \cref{eq:fourth-lipsc-and-cvx} using \cref{eq:Phi_one_g_circ_phi_zero_new} completes the proof for $i=1.$
Next, we consider the left endpoint of the ``left middle'' segment 
\[ 
\left[\intv_{\so\st\dotsc\sigma_{N}}[\ell],\quad x_{\mathrm{mid}}\right].
\]
To evaluate continuity at the left endpoint of this segment, we need to evaluate at $\intv_{\so\st\dotsc\sigma_{N}}[\ell] = \poso\circ\ptst\circ\dotsc\circ\pNsN(0)$
the functions $\Po\circ\Pt\circ\dotsc\circ\Phi^{(N-1)}\circ g_{\sigma_{N}}^{(N)}\circ\left(\poso\circ\ptst\circ\dotsc\circ\phi_{\sigma_{N-1}}^{(N-1)}\right)^{-1}(x)$ and $\cot(\thiNpb)\cdot\left(\poso\circ\ptst\circ\dotsc\circ\pNsN(0)-x\right)+\Po\circ\Pt\circ\dotsc\circ\PN(1)$. 
To this end, first observe 
that 
\begin{align*}
&\Po\circ\Pt\circ\dotsc\circ\Phi^{(N-1)}\circ g_{\sigma_{N}}^{(N)}\circ\left(\poso\circ\ptst\circ\dotsc\circ\phi_{\sigma_{N-1}}^{(N-1)}\right)^{-1}(\poso\circ\ptst\circ\dotsc\circ\pNsN(0))\\
&=\Po\circ\Pt\circ\dotsc\circ\Phi^{(N-1)}\circ g_{\sigma_{N}}^{(N)}\circ\phi_{\sigma_{N}}^{(N)}(0)
.\numberthis\label{eq:sixth-lipsc-and-cvx}
\end{align*}
Next, observe that evaluating the other function at $x = \poso\circ\ptst\circ\dotsc\circ\pNsN(0)$ gives: 
\begin{align*}
&\cot(\thiNpb)\cdot\left(\poso\circ\ptst\circ\dotsc\circ\pNsN(0)-\poso\circ\ptst\circ\dotsc\circ\pNsN(0)\right)+\Po\circ\Pt\circ\dotsc\circ\PN(1)\\
&= \Po\circ\Pt\circ\dotsc\PN(1)\\ &=\Po\circ\Pt\circ\dotsc\circ\Phi^{(N-1)}\circ\Phi^{(N)}(1)
.\numberthis\label{eq:fifth-lipsc-and-cvx}
\end{align*}
We may again use \cref{eq:Phi_one_g_circ_phi_zero_new} to equate \cref{eq:sixth-lipsc-and-cvx} and \cref{eq:fifth-lipsc-and-cvx}. Finally, we show continuity at the ``midpoint'' \[x_{\mathrm{mid}}= \hf\cdot\left(\poso\circ\ptst\circ\dotsc\circ\pNsN(0)+\poso\circ\ptst\circ\dotsc\circ\pNsN(1)\right).\] From \Cref{def:Lipsc_cvx_fn_construction}, we note that the left of the two middle segments (i.e., $x\in\left[\intv_{\so\st\dotsc\sigma_{N}}[\ell],\quad x_{\mathrm{mid}}\right]$) is \[ \rNs(x):=\cot(\thiNpb)\cdot\left(\poso\circ\ptst\circ\dotsc\circ\pNsN(0)-x\right)+\Po\circ\Pt\circ\dotsc\circ\PN(1).\]   Similarly,  we have 
that the segment to the right of $x_{\mathrm{mid}}$ (i.e., $x\in\left[x_{\mathrm{mid}},\quad\intv_{\so\st\dotsc\sigma_{N}}[r]\right]$) is described by 
\[ \rNs(x):= \cot(\thiNpb)\cdot\left(x-\poso\circ\ptst\circ\dotsc\circ\pNsN(1)\right)+\Po\circ\Pt\circ\dotsc\circ\PN(1).\] From these two definitions, we can check that the values of $\rNs(x_{\mathrm{mid}})$ from both the definitions coincide. The continuity at the  endpoints of segments to the right of $x_{\mathrm{mid}}$ may be similarly established. 

\paragraph{Proof of $1$-Lipschitzness.} Since $\rNs$ is piecewise linear and continuous, the proof of its Lipschitzness requires only proving Lipschitzness of each of the segments it is composed of. We use the following two observations to establish this fact. First, \[ (\Phi^{(i)})^\prime = (\phi^{(i)})^\prime, \]
 because $\Phi^{(i)}$ and $\phi^{(i)}$ differ by only a constant additive factor. Second, by application of the chain rule, one may note that  the derivative of a composition of affine functions equals the product of the derivatives of the individual composing functions. Using these two facts, we observe that \[ \lvert\left(\Po\circ\dotsc\Phi^{(i-1)} \circ \gisi \circ (\poso \circ \dotsc \phi^{(i)}_{\sigma_{i-1}})^{-1}\right)^{\prime}(x)\rvert=  |(\gisi)^\prime(x)| \leq  1,\] where we used the upper bound from \cref{bounds_on_slope} in the final step. For the two middle segments, the absolute value of the  slope is $\cot(\thiNpb)$. From the proof of \Cref{bounds_on_slope} and the fact that the cotangent function is non-increasing in $(0, \pi/2)$, we have $\cot(\thiNpb)\leq \cot(\theta^{(1)}_{\mathrm{base}}) = \cot(\arctan(1)) = 1$. 
This concludes the proof of $\rNs$ being a $1$-Lipschitz function. 

 \paragraph{Proof of convexity.} To prove convexity of $\rNs$, we 
  show that the derivatives of consecutive segments composing $\rNs$ are non-decreasing. Consider the segment $\left[\intv_{\so\st\dotsc\sigma_{i-1}}[\ell],\quad\intv_{\so\st\dotsc\sigma_{i}}[\ell]\right]$. We have that for some $x\in \left[\intv_{\so\st\dotsc\sigma_{i-1}}[\ell],\quad\intv_{\so\st\dotsc\sigma_{i}}[\ell]\right]$, the derivative of $\rNs$ at this point $x$ is: 
\begin{align*}
    \left( \Po\circ \dotsc \Phi^{(i-1)} \circ g^{(i)}_{\sigma_{i}} \circ (\poso \circ\ptst \circ\dotsc \phi^{(i-1)}_{\sigma_{i-1}})^{-1} \right)^\prime(x) =  \left(g^{(i)}_{\sigma_{i}}\right)^\prime(y) \leq  - \cot(\thipb),
\end{align*} where $y\in [0, \pisi(0)]$ and for the final bound, we used from \cref{all_slopes_of_gisi} the largest possible value for  the slope of the left segment of $g^{(i)}_{\sigma_{i}}$ (for $\gisi$, the segment $[0, \pisi]$ corresponds to the left segments of $\gisi$). 
In a similar fashion,  the smallest derivative of $\rNs$ on some $x$ in the segment $\left[\intv_{\so\st\dotsc\sigma_{i}},\quad\intv_{\so\st\dotsc\sigma_{i+1}}\right]$ may be derived to be at least \[  \left( \Po\circ \dotsc \Phi^{(i)} \circ g^{(i+1)}_{\sigma_{i+1}} \circ (\poso \circ\ptst \circ\dotsc \phi^{(i)}_{\sigma_{i}})^{-1} \right)^\prime(x) 
\geq -\cot(\thipb) 
. \] Therefore, the derivatives of consecutive segments composing $\rNs$ to the left  of the middle segments are non-decreasing, going from left to right. The proof analogously extends to the right  of the middle segments. For the two middle segments, note that the left segment has slope $-\cot(\thiNpb)$ and the right segment has slope $\cot(\thiNpb)$; since $\thiNpb\in (\arctan(1), \arctan(8)) \subseteq (0, \pi/2)$, this means the slope is increasing when crossing $x_{\mathrm{mid}}$ from left to right. Finally, going from the left of the middle left segment to the middle left segment, the slope can only increase because of our choice of slope of the middle left segment; the analogous argument applies to the right side. Thus, overall, we have shown that the slope  increases going from left to right, thus proving the convexity of $\rNs$.

\paragraph{Proof of $\rNs(0) \leq 2$.} From \Cref{def:Lipsc_cvx_fn_construction}, one may check that $\rNs(0) = \goso(0)$, which from  \Cref{eq:g_one_g_zero_eval_one_zero} satisfies $\goso(0)=1$, thus proving our claim.   

\vspace{1em}

We now prove \Cref{lem: basic hard ii}, i.e., that $\rNs$ has a unique minimizer, that the absolute value of its slope is lower bounded by a constant, and obtain the expression for its minimum value. 

 \paragraph{Proof of lower bound on absolute slope.} For any $i\in [2, N]$, consider a point $x$ that lies in the segment $\left(\intv_{\so\st\dotsc\sigma_{i-1}}[\ell],\quad\intv_{\so\st\dotsc\sigma_{i}}[\ell]\right)$. We have that the derivative of $\rNs$ at this point is given by: 
\begin{align*}
    \lvert\left( \Po\circ \dotsc \Phi^{(i-1)} \circ g^{(i)}_{\sigma_{i}} \circ (\poso \circ\ptst \circ\dotsc \phi^{(i-1)}_{\sigma_{i-1}})^{-1} \right)^\prime(x)\rvert =  \lvert\left(g^{(i)}_{\sigma_{i}}\right)^\prime(y)\rvert\geq \tfrac{1}{8}, 
\end{align*} where $y\in (0, \pisi(0))$ and the final lower bound follows by applying \cref{bounds_on_slope}. The same lower bound may be similarly obtained for segments to the right of the middle segment. For the two middle segments, the absolute value of the slope is $\cot(\thiNpb)$. From the non-increasing property of the cotangent function and our initial choice of $\theta^{(1)}_{\mathrm{base}}$, we have $\cot(\thiNpb)\geq \cot(\arctan(8))=1/8$, thus concluding the proof, overall, of the lower bound on the absolute value of the slope. 

\paragraph{Proof of unique minimizer of $\rNs$.} As we just showed,  $\rNs$ is convex. At the point $x_{\mathrm{mid}}$, the slope of $\rNs$ changes from $-\cot(\thiNpb)$ to $\cot(\thiNpb)$, which implies that there exists a zero subgradient of $\rNs$ at $x_{\mathrm{mid}}$. Hence, $x_{\mathrm{mid}}$ is \emph{a} minimizer of $\rNs$. This minimizer is unique because the two segments of the function intersecting at it both have non-zero slopes. This minimum value  is obtained by evaluating $\rNs$ at $x_{\mathrm{mid}}$: \[ \cot(\thiNpb)\cdot\hf\left(\poso\circ\ptst\circ\dotsc\circ\pNsN(0)-\poso\circ\ptst\circ\dotsc\circ\pNsN(1)\right)+\Po\circ\Pt\circ\dotsc\circ\PN(1).\] 

\paragraph{Construction of $\hbNs$.}
To use $\rNs$ to construct $\hbNs$  as defined in \Cref{lem: basic hard}, we define \[\hbNs = \rNs+2-\rNs(x_{\mathrm{mid}}). 
\] Since we only added a constant term to $\rNs$, its properties of continuity, $1$-Lipschitzness, convexity, unique minimizer $x^*= x_{\mathrm{mid}}$ in $(0,1)$, and lower bound of $1/8$ on slope remain preserved. 
Next, note that since $\rNs\geq \rNs(x_{\mathrm{mid}})$, it implies $\hbNs\geq 2$, which proves the assertion that $\hbNs:\mathbb{R}\mapsto[2,\infty)$. 
Finally, at $x=0$, we have $\hbNs(0)=2 + \rNs(0)-\rNs(x_{\mathrm{mid}})\leq 3$ because $\rNs$ is a piecewise affine function over $[0,1]$ with maximum slope being $1$, and so its maximum range of values can be $1$. Thus, our constructed $\bar{h}$ satisfies \Cref{lem: basic hard i} and \Cref{lem: basic hard ii}.

\paragraph{Proof of \Cref{lem: basic hard iii}}

Let $k=\tfrac{1}{4}\sqrt{\log\left(\tfrac{1}{\rho}\right)}$ 
 for some $\rho>0$ to be determined, and $N=k+1,~\sigma\sim\mathrm{Unif}\{0,1\}^N$. Consider the iterates of $\Acal$, $x_1,\dots,x_T$,
as random variables when applied to the (random) function $\hbNs$. 
Since $x^*=x_{\mathrm{mid}}\in \intv_{\so\dotsc\sigma_N}$, we have
\begin{align*}
\Pr[\exists t\in[T]:|x_t-x^*|\leq\rho]
\leq
\Pr[\exists t\in[T]:\dist(x_t,\intv_{\so\dotsc\sigma_N})\leq\rho]
\leq
\Pr[\exists t\in[T]:x_t\in\intv_{\so\dotsc\sigma_{k}}],
\end{align*}
where the second inequality follows from  \Cref{dist_inv_N_less_del_implies_x_in_intv_k}. By denoting the ``progress tracking'' stochastic process
\[
Z_0:=0,~Z_t:=\max\{l\in\NN:\exists s\leq t, x_s\in\intv_{\so\dotsc\sigma_l}\}~,
\]
we note that $Z_{t+1}-Z_{t}\geq0$ with probability 1, and moreover that $\Pr[Z_{t+1}-Z_{t}=m]\leq{2^{-(m-1)}}$ by \Cref{lem:r_cantGuess}. Hence
\begin{align*}
\Pr[\exists t\in[T]:x_t\in\intv_{\so\dotsc\sigma_{k}}]
&=\Pr[Z_T\geq k]
\leq \frac{1}{k}\E[Z_T]
=\frac{1}{k}\sum_{j=1}^{T}\E[Z_j-Z_{j-1}]
\\&=\frac{1}{k}\sum_{j=1}^{T}\sum_{m=0}^{\infty}\Pr[Z_j-Z_{j-1}=m]
\leq\frac{1}{k}\sum_{j=1}^{T}\sum_{m=0}^{\infty}2^{-(m-1)}
\\
&=\frac{4T}{k}= \frac{16T}{\sqrt{\log(1/\rho)}}~.
\end{align*}
Finally, by setting $\rho:=\exp(-256T^2/\gamma^2)$
the quantity above is bounded by $\gamma$, completing the proof.

 \end{proof}

\section{Proof of \Cref{lem: choice of w}}\label{sec:proof of lem with many cases}

\noindent The goal of this section is to prove \Cref{lem: choice of w}, which we first recall below. 
\lemmaWithManyCases*

\begin{proof}[Proof of \cref{lem: choice of w}] 
Throughout the proof we omit the subscripts $\bw,\mu$. We recall  that $
f(\bx):=\max\left\{h(\bx)-\sigma_{\mu}( q(\bx-\bx^*)),0\right\}$ by \cref{def:fwu}. 

~\paragraph{Proof of \Cref{lem: choice of w i}.} 
It is clear that $f$ is non-negative by design. 
The function $h$ (in \cref{def:fwu}) is $\tfrac{1}{2}$-Lipschitz (by design in \cref{lem: d>=2 hard}); the function $\sigma_\mu$ (in \cref{def:fwu}) has the following derivative: 
\[\sigma_\mu^\prime(z) = \begin{cases}
    0, & z\leq 0 \\
    \tfrac{z}{4\mu}, & z\in(0,\mu] \\
    \tfrac{1}{4}, & z>\mu
\end{cases} \numberthis\label{eq:derSigmaMu}\] and is therefore $\tfrac{1}{4}$-Lipschitz. Further, $\bz\mapsto\norm{\bz},~\bz\mapsto\inner{\bar{\bw},\bz},~\bz\mapsto\max\{\bz,0\}$ and translations are all $1$-Lipschitz. Finally, 
the summation (respectively, positive scaling) of functions results in a summation (respectively, scaling) of Lipschitz constants; and
the composition of functions yields a product of Lipschitz constants. These imply that $q$ (as in \cref{def:q_and_sigma_mu}) is $\tfrac{3}{2}$-Lipschitz and  $f$ is 
$1$-Lipschitz. 

To prove Clarke regularity of $f$, we start by examining the function \[\phi(\bx):=-\sigma_\mu(q(\bx)),\numberthis\label{def:phi_in_terms_of_sigma_mu_q}\] and note that $\phi$ is continuously differentiable over $\reals^d\setminus\{-\bw\}$ with
\[
\nabla \phi(\bx)
=-\sigma_{\mu}'(\inner{\bar{\bw},\bx+\bw}-\tfrac{1}{2}\norm{\bx+\bw})\cdot (\bar{\bw}-\tfrac{1}{2}(\overline{\bx+\bw})),
~\forall \bx\neq -\bw.
\]
In particular, by \eqref{eq:derSigmaMu}, we see that
$\lim_{\bx\to-\bw}\nabla\phi(\bx)=\bzero$. Further note that
\begin{align*}
{\lim\sup}_{\bv\to\bzero}\frac{|\phi(-\bw+\bv)-\phi(-\bw)|}{\norm{\bv}}
&={\lim\sup}_{\bv\to\bzero}\frac{|\phi(-\bw+\bv)|}{\norm{\bv}}
={\lim\sup}_{\bv\to\bzero}\frac{|\sigma_{\mu}(\inner{\bar{\bw},\bv}-\tfrac{1}{2}\norm{\bv})|}{\norm{\bv}}=0,
\end{align*}
where the last equality follows from
$\inner{\bar{\bw},\bv}-\tfrac{1}{2}\norm{\bv}\overset{\bv\to\bzero}{\longrightarrow}\bzero$
and
$\lim_{z\to0}\sigma'_{\mu}(z)=0$.
Therefore $\phi$ is differentiable at $-\bw$ with $\nabla\phi(-\bw)=\bzero$, hence everywhere continuously differentiable, which implies by \cref{item:prop_cont_diff_implies_reg} that for $\phi$ as defined in \cref{def:phi_in_terms_of_sigma_mu_q}, it holds that  \[\phi \text{ is regular}.\numberthis\label{eq:phi_is_regular}
\] 
Since $\phi$ is regular, the shifted function $\phi(\cdot-\bx^*)$ is regular as well. Moreover,  since $h$ is convex and Lipschitz, it is regular as well (\Cref{item:prop_reg_convex}), hence by \cref{item:prop_reg_finsum} we have that \[h(\cdot)+\phi(\cdot-\bx^*) \text{ is regular}.
\] Finally, since $\max\{\cdot,0\}$ is convex and Lipschitz, we conclude by 
\cref{{item:prop_reg_convex}}
that it is regular; hence, by \cref{fact:regularity_of_composition}, the composition  $f_{\bw,\mu}(\bx)=\max\left\{h(\bx)-\sigma_{\mu}( q(\bx-\bx^*)),0\right\}$ is regular. 

~\paragraph{Proof of \Cref{lem: choice of w ii}.} Our proof for this part of the lemma closely follows that by \citet{tian2021hardness} and \citet{kornowski2022oracle}. 
We start by proving that, for $c< \tfrac{1}{100}$, the function $\varphi(\bx):=h(\bx+\bx^*)-\sigma_{\mu}\left(\inner{\bar{\bw},\bx+\bw}-\tfrac{1}{2}\norm{\bx+\bw}\right)$ has no $c$-stationary points, by an exhaustive case analysis showing a universal positive lower bound (of $\tfrac{1}{100}$) on the absolute value of the minimum norm element of its Clarke subdifferential in every case.
\begin{itemizec}
    \item $\bx=\bzero:$ By applying \cref{fact:prop_clarke_subdifferential_calculus}, the fact that $\|\mathbf{w}\|\gg \mu$, and \cref{{eq:derSigmaMu}}, it holds that $$\partial \varphi(\bzero) = \left\{\partial h(\bx^*) - \partial \sigma_{\mu}(\tfrac{1}{2}\|\bw\|) \right\}
    =\left\{\partial h(\bx^*)-\tfrac{1}{8}\bar\bw\right\}.$$
    By further examining the definition of $h$ (from \cref{{lemma_item_prob_premain_proof}}), we may simplify this to 
    \[
    \partial \varphi(\bzero)
    =\left\{\partial h(\bx^*)-\tfrac{1}{8}\bar\bw\right\} 
    \subseteq \left\{ \tfrac{1}{32}\bu+\lambda\be_d-\tfrac{1}{8}\bar\bw~|~\lambda\in\partial\bar{h}(x^*),\norm{\bu}\leq1,\bu\perp\be_d\right\}~,
    \] where $\bar{h}$ is as defined in \cref{{lem: d>=2 hard}}. 
    Since projecting any  vector from the above set 
    $\partial \varphi(\bzero)$ onto $\mathrm{span}(\be_d)^\perp$ cannot increase its norm, we may conclude that
    \[
    \norm{\tfrac{1}{32}\bu +  \lambda\be_d -\tfrac{1}{8}\bar\bw}
    \geq \norm{\tfrac{1}{32}\bu-\tfrac{1}{8}\bar\bw} 
    \geq\tfrac{1}{8}-\tfrac{1}{32}=\tfrac{3}{32}.
    \]

    \item $\bx=-\bw:$ Recall in \cref{{eq:phi_is_regular}} we proved the Clarke regularity of $\phi(\bx):=-\sigma_\mu(\inner{\bar{\bw},\bx+\bw}-\tfrac{1}{2}\norm{\bx+\bw})$. Applying this property at $-\bw$ and noting, from \cref{{eq:derSigmaMu}}, that $\partial \phi(-\bw)=\bzero$, we get that
    \[
    \partial\varphi(-\bw)
    =\left\{\partial h(-\bw+\bx^*)-\partial\phi(-\bw)\right\}
    \subset\left\{\tfrac{1}{32}\bar\bw+\lambda\be_d-\bzero~|~\lambda\in\partial\bar{h}(-w_d+x^*)\right\}, 
    \] where we used the fact that $\bx^*\perp \be_d$ (from \cref{{lemma_item_prob_premain_proof}}) and $\bw\perp\be_d$. 
  By projecting any such vector onto $\mathrm{span}(\be_d)^\perp$, we see that it clearly has norm of at least $\frac{1}{32}$. 

    \item $x_d\neq 0:$ It holds that 
    \[\partial\varphi(\bx)\subseteq \left\{\tfrac{1}{32}\bu+\lambda\be_d-s\left(\bar{\bw}-\tfrac{1}{2}\bv\right)~|~\lambda\in\partial\bar{h}(x_d+x^*),\bu\perp\be_d,~s\in[0,\tfrac{1}{4}],\norm{\bv}\leq1\right\}~,
    \] where we used \cref{eq:derSigmaMu} to evaluate $\partial \phi(\bx)$. Next, since $x_d\neq 0$, it implies that $x_d+x^*\neq x^*$,
which implies, by \cref{{item:unique_min_min_slope}},  that 
$\forall\lambda\in\partial\bar{h}(x_d+x^*):|\lambda|\geq \tfrac{1}{16}$. 
Then, projecting any vector from the set above onto $\be_d$, we see that
    \[
    \norm{\tfrac{1}{32}\bu + \lambda\be_d -s\left(\bar{\bw}-\tfrac{1}{2}\bv\right)}
    \geq | |\lambda|-\tfrac{1}{4}\cdot\tfrac{1}{2}| \geq \tfrac{1}{16}~.
    \]

    \item $x_d=0,~\bx\notin \{\bzero,-\bw\}, ~\inner{\bar{\bw},\overline{\bx+\bw}}<\tfrac{1}{2}:$ Note that for any such $\bx$, we have that $\inner{\bar{\bw},{\bx+\bw}}-\tfrac{1}{2}\norm{\bx+\bw} = \norm{\bx+\bw}\cdot(\inner{\bar{\bw},\overline{\bx+\bw}}-\tfrac{1}{2})<0$, which  implies that  $\sigma_\mu(\inner{\bar{\bw},\bx+\bw}-\tfrac{1}{2}\norm{\bx+\bw})=0$. This in turn  implies $\phi(\bx)=0$. Consequently, $\varphi(\bx)$ is locally identical to $h(\bx+\bx^*)=\bar{h}(x_d+x^*)+\tfrac{1}{32}\norm{\bx_{1:d-1}} = \bar{h}(x^*) + \tfrac{1}{32}\norm{\bx_{1:d-1}}$, where we used $\bx^*_{1:d-1}=\mathbf{0}_{d-1}$ (from \cref{{lemma_item_prob_premain_proof}}) in the first step and $x_d=0$ (as assumed in this case) in the final step. Since $x_d=0$ yet $\bx\neq \bzero$, it must be that $\bx_{1:d-1}\neq \bzero_{d-1}$, thus
    \[
    \partial\varphi(\bx) = \left\{ \partial \bar{h}(x^*) + \tfrac{1}{32} \partial \|\bx_{1:d-1}\| \right\}\subseteq \left\{\tfrac{1}{32}\overline{\bx_{1:d-1}} + \lambda \be_{d}~|~\lambda\in\partial h(x^*)\right\}.
    \]
    For any $\mathbf{g}\in  \partial\varphi(\bx)$, we have that $\|\mathbf{g}\|\geq \inner{\mathbf{g}, \overline{\bx_{1:d-1}}} \geq \tfrac{1}{32}$.
    
    \item $x_d=0,~\bx\notin \{\bzero,-\bw\}, ~\inner{\bar{\bw},\overline{\bx+\bw}}>\tfrac{1}{2}+\tfrac{\mu}{\norm{\bx+\bw}}:$ In this case, we have $\inner{\bar{\bw},\bx+\bw}-\tfrac{1}{2}\norm{\bx+\bw} = \left(\inner{\bar{\bw},\overline{\bx+\bw}}-\tfrac{1}{2}\right)\norm{\bx+\bw} \geq \mu$. Then, noting from \cref{{def:q_and_sigma_mu}} the definition of $\sigma_{\mu}$ for the appropriate range of the argument yields that  for any such $\bx:\sigma_\mu(\inner{\bar{\bw},\bx+\bw}-\tfrac{1}{2}\norm{\bx+\bw})=\tfrac{1}{4}\inner{\bar{\bw},\bx+\bw}-\tfrac{1}{8}\norm{\bx+\bw}-\tfrac{\mu}{8}$.  
    Combining this with $x_d=0$, we have that $\varphi(\bx)$ is locally identical to
    \[
    \bx\mapsto\bar{h}(x^*)+\tfrac{1}{32}\norm{\bx_{1:d-1}}-\tfrac{1}{4}\inner{\bar{\bw},\bx+\bw}+\tfrac{1}{8}\norm{\bx+\bw}+\tfrac{\mu}{8}~.
    \]
    As in the previous case, since $x_d=0$ yet $\bx\neq \bzero$, we have that $\bx_{1:d-1}\neq \bzero_{d-1}$, thus 
    \[
    \partial\varphi(\bx)\subseteq
    \left\{\tfrac{1}{32}\overline{\bx_{1:d-1}} + \lambda\be_d -\tfrac{1}{4}\bar\bw+\tfrac{1}{8}(\overline{\bx+\bw})~|~\lambda\in\partial h(x^*)
    \right\}~.
    \]
    Hence, for $\bg\in\partial\varphi(\bx)$, it holds that 
    \[
    \norm{\bg}\geq \inner{\bg,-\bar\bw}
    =-\frac{1}{32}\inner{\bar\bx,\bar\bw}+\frac{1}{4}-\frac{1}{8} \inner{\overline{\bx+\bw},\bar\bw}
    \geq \frac{1}{4}-\frac{5}{32}=\frac{3}{32}~.
    \]

    \item $x_d=0,~\bx\notin \{\bzero,-\bw\}, ~\|\bx+\bw\|\leq 10\mu, ~\inner{\bar{\bw},\overline{\bx+\bw}}\in[\tfrac{1}{2},\tfrac{1}{2}+\tfrac{\mu}{\norm{\bx+\bw}}]:$ 
    We have that \[\partial \varphi(\bx) = \left\{\tfrac{1}{32} \overline{\bx_{1:d-1}} + \lambda \be_d - v\cdot (\bar{\bw} 
 - \tfrac{1}{2}\overline{\bx+\bw})~|~\lambda\in\partial h(x^*)\right\},\numberthis\label{eq:last_case_one_diff_exp}\] where $0\leq v:= \tfrac{1}{4\mu}(\inner{\bar{\bw}, \bx+\bw} - \tfrac{1}{2}\|\bx+\bw\|)\leq 1$. The claimed bounds on $v$ follow from the assumption $\inner{\bar{\bw},\overline{\bx+\bw}}\in[\tfrac{1}{2},\tfrac{1}{2}+\tfrac{\mu}{\norm{\bx+\bw}}]$. The final term in \cref{eq:last_case_one_diff_exp} comes from plugging in the appropriate argument in \cref{eq:derSigmaMu} following the deduced range of $v$. We now proceed to show a lower bound on $-\bar{\bw}^\top \bar{\bx}$, which we will use to show the desired lower bound on \cref{eq:last_case_one_diff_exp}. To this end, define $\bx^\prime:= -\frac{\bw^\top \bx}{\|\bx\|^2}\cdot\bx$ (which is a valid operation because we assume $\bx\neq \bzero$). We can verify that: \[\inner{\bx^\prime, \bx^\prime+\bw} = \|\bx^\prime\|^2 + \bw^\top \bx^\prime = 0,\numberthis\label{eq:inner_xp_xp_plus_w_zero}\]where the last step follows from plugging in the definition of $\bx^\prime$. Next, by starting with the assumption that $10\mu\geq \|\bx+\bw\|$ and expressing $\bx+\bw$ as $\bx-\bx^\prime+\bx^\prime+\bw$, we have
 \[100\mu^2\geq \|\bx+\bw\|^2 = \|\bx^\prime + \bw\|^2 + \|\bx-\bx^\prime\|^2 + 2 \inner{\bx-\bx^\prime, \bx^\prime+\bw}.\numberthis\label[ineq]{eq:tau_squared_lb_first}\] Now, since $\inner{\bx^\prime, \bx^\prime+\bw} =0$ (by \cref{eq:inner_xp_xp_plus_w_zero}) and $\bx$ is proportional to $\bx^\prime$ (by design of $\bx^\prime$), we have $\inner{\bx, \bx^\prime+\bw} =0$ as well. Consequently, we have $\inner{\bx-\bx^\prime, \bx^\prime+\bw}=0$, which implies, in \cref{eq:tau_squared_lb_first}, that \[ 100\mu^2 \geq  \|\bx^\prime + \bw\|^2 + \|\bx-\bx^\prime\|^2. \numberthis\label[ineq]{eq:tau_squared_lb_second}\] By recalling that $\|\bw\|= 1000\mu \geq \|\bx+\bw\|$, we have that ${\bw}^\top {\bx}\leq0$.  Combining this with the definition of $\bx^\prime$, we obtain $\bar{\bw}^\top \bar{\bx}=\bar{\bw}^\top \bar{\bx}^\prime$, from which we conclude that \[-\bar{\bw}^\top \bar{\bx} = -\bar{\bw}^\top \bar{\bx}^\prime = \frac{\|\bx^\prime\|}{\|\bw\|}. \numberthis\label{eq:minus_barW_barXprime_equals_frac}\] Next, by again using $\inner{\bx^\prime, \bx^\prime+\bw}=0$, we have $\|\bw\|^2 = \|\bx^\prime\|^2 + \|\bx^\prime + \bw\|^2$. Then, we have   that  $ \|\bw\|^2 = \|\bx^\prime\|^2 + \|\bx^\prime + \bw\|^2 \leq \|\bx^\prime\|^2 + 100\mu^2$, where we used $\|\bx^\prime  + \bw\|^2\leq 100\mu^2 $ from \cref{eq:tau_squared_lb_second}. Rearranging and combining with \cref{eq:minus_barW_barXprime_equals_frac} yields \[ -\bar{\bw}^\top \bar{\bx} = \frac{\|\bx^\prime\|}{\|\bw\|} \geq \sqrt{1 - \frac{100\mu^2}{\|\bw\|^2}}.\numberthis\label[ineq]{eq:lower_bound_minus_wbarTopxbar}\] Next, we take the inner product of some element in $\partial \varphi(\bx)$ with $-\bar{\bw}$ (as defined in \cref{{eq:last_case_one_diff_exp}}) and plug in \cref{eq:lower_bound_minus_wbarTopxbar} to obtain: \[-\tfrac{1}{32}\bar{\bw}^\top \bar{\bx} + v - \tfrac{v}{2}\cdot\bar{\bw}^\top (\overline{\bx+\bw})\geq \tfrac{1}{32}\sqrt{1-\tfrac{100\mu^2}{\|\bw\|^2}} >\tfrac{1}{33}, \] where the final step follows from our choice of $\bw$ such that $\|\bw\|=1000\mu$. 
   \item $x_d=0,~\bx\notin \{\bzero,-\bw\}, ~\|\bx+\bw\|\geq 10\mu, ~\inner{\bar{\bw},\overline{\bx+\bw}}\in[\tfrac{1}{2},\tfrac{1}{2}+\tfrac{\mu}{\norm{\bx+\bw}}]:$ As in \cref{{eq:last_case_one_diff_exp}} in the previous case, we have for $0\leq v\leq 1$ that \begin{align*} 
   \partial \varphi(\bx) &= \left\{ \tfrac{1}{32} \overline{\bx_{1:d-1}} + \lambda \be_d  - v\cdot (\bar{\bw} 
 - \tfrac{1}{2}\overline{\bx+\bw})~|~\lambda\in\partial h(x^*), v\in [0, 1]\right\},\numberthis\label{eq:last_case_really_the_last_case_one_diff_exp}\\ 
 &= \left\{\lambda \be_d  + \left(\tfrac{1}{32\|\bx\|} + \tfrac{v}{2\|\bx+\bw\|}\right)\bx + \left(\tfrac{v}{2\|\bx+\bw\|} - \tfrac{v}{\|\bw\|}\right)\bw  ~|~\lambda\in\partial h(x^*), v\in [0, 1]\right\}.\end{align*} Denote $\bx= \bx_{|} + \bx_{\perp}$, where $\bx_{\perp} = (\mathbf{I} - \bar{\bw}\bar{\bw}^\top)\bx$ is the orthogonal projection of $\bx$ onto $\mathrm{span}(\bw)^\perp$, and $\bx_{|}\in \mathrm{span}(\bw)$. Recall also that $x_d=0$ (by assumption in this case) and $w_d =0$ (by our choice of $\bw$). For any $\mathbf{g}\in \partial\varphi(\bx)$, we can therefore write, for some scalar $\alpha$, that \begin{align*}
     \|\mathbf{g}\| &\geq \|\left( \tfrac{1}{32\|\bx\|} + \tfrac{v}{2\|\bx+\bw\|} \right) \bx + \left( \tfrac{v}{2\|\bx+\bw\|} - \tfrac{v}{\|\bw\|}\right) \bw\|\\
     &= \| \left( \tfrac{1}{32\|\bx\|} + \tfrac{v}{2\|\bx+\bw\|} \right) \bx_{\perp} + \alpha \bw\|\\
     &\geq \|\left( \tfrac{1}{32\|\bx\|} + \tfrac{v}{2\|\bx+\bw\|}\right) \bx_{\perp}\|\\ 
     &\geq \tfrac{\|\bx_{\perp}\|}{32\|\bx\|}, \numberthis\label[ineq]{eq:last_case_g_norm_lb_intermediate}
 \end{align*}  where the first step is by projecting onto $\mathrm{span}(\be_d)^\perp$; the second step is by splitting $\bx$ into $\bx_{|}+\bx_{\perp}$ and absorbing $\bx_{|}$ into the term written as a multiple of $\bw$ (valid since $\bx_{|}\in \mathrm{span}(\bw)$); the third step is because $\bx_{\perp}\perp \bw$, and so the norm of their sum is at least as large as each of them; the fourth step is by $v\geq0$. Further, since $\mathbf{I}-\bar{\bw}\bar{\bw}^\top$ is an orthogonal projection, we have \[\|\bx_{\perp}\|^2 = \inner{\bx, (\mathbf{I}-\bar{\bw}\bar{\bw}^\top)^2 \bx} = \inner{\bx, (\mathbf{I}-\bar{\bw}\bar{\bw}^\top) \bx} = \|\bx\|^2(1-\inner{\bar{\bw}, \bar{\bx}}^2).\] Plugging into \cref{eq:last_case_g_norm_lb_intermediate} yields $\|\mathbf{g}\|\geq \tfrac{1}{32}\sqrt{1-\inner{\bar{\bw}, \bar{\bx}}^2}$, where the square root operation is valid because $|\inner{\bar{\bw}, \bar{\bx}}|\leq 1$.  Now, suppose that there exists a $\mathbf{g}\in \partial\varphi(\bx)$ with $\|\mathbf{g}\|\leq \epsilon\leq \tfrac{1}{32}$ (note that if $\|\mathbf{g}\|\geq \tfrac{1}{32}$ for all $\mathbf{g}\in \partial\varphi(\bx)$, then we are done). It then follows that \[\tfrac{1}{32}\sqrt{1-\inner{\bar{\bw}, \bar{\bx}}^2} \leq \epsilon.\numberthis\label[ineq]{eq:final_case_last_but_one_ineq}\] Next, the assumed range implies \[\tfrac{1}{2}\leq \inner{\bar{\bw},\overline{\bx+\bw}} \leq \tfrac{3}{5}.\numberthis\label[ineq]{eq:assumption_implies_sandwich_bound}\] If $\inner{\bar{\bw}, \bar{\bx}}\leq 0$, then by \cref{eq:final_case_last_but_one_ineq}, it must be that $\inner{\bar{\bw}, \bar{\bx}}\leq -\sqrt{1-1024\epsilon^2}$. Consider any  $\mathbf{u}\in \partial \varphi(\bx)$; plugging  \cref{eq:assumption_implies_sandwich_bound} into \cref{{eq:last_case_really_the_last_case_one_diff_exp}}, we have \[\inner{\mathbf{u}, \bar{\bw}} = \tfrac{1}{32}\inner{\bar{\bw}, \bar{\bx}} - v\cdot(1-\tfrac{1}{2}\cdot\tfrac{3}{5})\leq -\tfrac{1}{32}\sqrt{1-1024\epsilon^2},\] which implies that $\|\mathbf{u}\|\geq \tfrac{1}{32}\sqrt{1-1024\epsilon^2}$ for any $\mathbf{u}\in \partial\varphi(\bx)$. Thus, if $\|\mathbf{u}\|\leq\epsilon$, then chaining the two inequalities yields $\epsilon\geq \tfrac{1}{32}\sqrt{1-1024\epsilon^2}$. This implies that $\epsilon\geq \tfrac{1}{\sqrt{2048}}$. On the other hand, if $\inner{\bar{\bw}, \bar{\bx}} \geq 0,$ then combining with \cref{eq:final_case_last_but_one_ineq} gives $\inner{\bar{\bw}, \bar{\bx}}\geq \sqrt{1-1024\epsilon^2}$. Hence, \[ \inner{\bar{\bw}, \bx+\bw} = \inner{\bar{\bw}, \bx} + \|\bw\| \geq \|\bx\|\sqrt{1-1024\epsilon^2} + \|\bw\|\geq \sqrt{1-1024\epsilon^2}\|\bx+\bw\|. \] If $\epsilon<\tfrac{1}{50},$ then $\tfrac{3}{5}\geq \inner{\bar{\bw}, \overline{\bw+\bx}} > \sqrt{1-\tfrac{1024}{2500}}$, which is a contradiction. Thus, combining both the cases, we see that the lower bound must be at least $\min\left(\tfrac{1}{\sqrt{2048}}, \tfrac{1}{50}\right).$
\end{itemizec}

From the above analysis, we conclude that $\varphi(\bx)=h(\bx+\bx^*)-\sigma_{\mu}\left(\inner{\bar{\bw},\bx+\bw}-\tfrac{1}{2}\norm{\bx+\bw}\right)$ has no $c$-stationary points for  $c\leq \tfrac{1}{100}$. We now show that, {for $c = \tfrac{1}{100}$}, any $c$-stationary point of $f(\bx)=\max\{\varphi(\bx-\bx^*),0\}$ (which matches the definition of $f$ in \cref{{eq:def_fwu}} combined with \cref{rem:choice_of_w}) satisfies $f(\bx)=0$.  Indeed, if there existed  $\bx$ with 
$\norm{\bar\partial f(\bx)}\leq c$ and $f(\bx)>0$, then we note by the latter that $f(\cdot)=\varphi(\cdot-\bx^*)$ in an open neighborhood of $\bx$, thus 
$\norm{\bar\partial \varphi(\bx-\bx^*)}=\norm{\bar\partial f(\bx)}\leq c$, which is a contradiction by our earlier claim on $c$-stationarity of $\varphi$.

~\paragraph{Proof of \Cref{lem: choice of w iii}.}

We denote by $(\bx_t^{h})_{t=1}^{T}$ the (possibly random) iterates produced by $\Acal$ when applied to $h$.
We will first show that for some fixed $\bw\in\reals^d:$
\begin{align*} 
    \Pr_{\Acal}\left[
    \min_{t\in[T]}\norm{\bx_t^{h}-\bx^*}\geq \rho\text{~~and~~}\max_{t\in[T]}\inner{\bar\bw,\overline{\bx_t^{h}-\bx^*}}<\frac{1}{3}
    \right]&\geq 1-2\gamma~.\numberthis\label[ineq]{eq: good event}
\end{align*}
To see why, recall that by \cref{lem: d>=2 hard} we know that
$\Pr_{\Acal}[\min_{t\in[T]}\norm{\bx_t^{h}-\bx^*}\geq \rho]\geq1-\gamma$.
Furthermore, letting $\bw\in\reals^d$ be a random vector that satisfies $\bw_{1:d-1}\sim\mathrm{Unif}(\tfrac{\rho}{99}\cdot\Sbb^{d-2}),~\Pr[w_d=0]=1$, and noting that $(\bx_t^{h})_{t=1}^{T},\bx^*$ are independent of $\bw$ and that 
$\bar{\bw}_{1:d-1}\sim\mathrm{Unif}(\Sbb^{d-2})$, applying a standard tail bound on the inner product of a uniformly random unit vector (cf. \citep[Lemma 2.2]{ball1997elementary}) we get
\[
\Pr_{\bw}\left[\max_{t\in[T]}\inner{\bar\bw,\overline{\bx_t^{h}-\bx^*}}\geq\frac{1}{3}\right]
=\Pr_{\bw}\left[\max_{t\in[T]}\inner{\bar\bw_{1:d-1},\overline{(\bx_t^{h}-\bx^*)}_{1:d-1}}\geq\frac{1}{3}\right]
\leq T\exp\left(-d/36\right)=\gamma~.
\]
By the union bound, we see that \Cref{eq: good event} holds with probability at least $1-2\gamma$ over the joint probability of $\bw,\Acal$, thus (via the probabilistic method argument) there exists some fixed $\bw$ for which it holds over the randomness of $\Acal$. We therefore fix $\bw$ so that \Cref{eq: good event} holds, and assume the high probability event indeed occurs. We aim to show that under this event, for all $t\in[T]:~f(\bx_t)\geq1$, which will then conclude the proof.
Indeed, under this event, we see that
\[
\max_{t\in[T]}\inner{\bar\bw,\overline{\bx_t^{h}-\bx^*}}
<\frac{1}{3}<\frac{1}{2}-\frac{1}{66}
\leq \frac{1}{2}-\frac{\rho}{66\norm{\bx_t^{h}-\bx^*}}~.
\]
Further note that for any $\bx\neq\bx^*$, if $\inner{\bar\bw,\overline{\bx-\bx^*}}<\frac{1}{2}-\frac{\rho}{66\norm{\bx_t^{h}-\bx^*}}$ then 
since $\norm{\bw}=\tfrac{\rho}{99}$
a straightforward calculation yields $\inner{\bar{\bw},\bx-\bx^*+\bw}-\frac{1}{2}\norm{\bx-\bx^*+\bw}< 0$. As this is an open condition with respect to $\bx$, by construction of $f$ this implies that $f(\cdot)=h(\cdot)$ in a neighborhood of $\bx$.
We therefore get that for all $t\in[T]:h\equiv f$ in a neighborhood of $\bx_t^h$, so in particular we see that $\bx_t^h=\bx_t$, namely applying $\Acal$ to $h$ results in the same iterate sequence as if the algorithm were applied to $f$. Thus
\[
\min_{t\in[T]}f(\bx_t)
=\min_{t\in[T]}f(\bx_t^h)
=\min_{t\in[T]}h(\bx_t^h)\geq 1~.
\]

\end{proof}

\section*{Acknowledgements}

We would like to thank Ali Jadbabaie, Dmitriy Drusvyatskiy, Damek Davis, and Hadi Daneshmand 
for insightful discussions.
GK is supported by an Azrieli Foundation graduate fellowship.
SP is supported by ONR N00014-23-1-2299 (via Ali Jadbabaie) and  NSF CCF-2112665 (via Suvrit Sra). 
GK and OS are supported in part by European Research Council (ERC) grant 754705.

\clearpage
\printbibliography

\newpage

\end{document}